\documentclass[12pt,reqno]{amsart}

\usepackage{amsmath,amsthm,amsfonts,amscd,url,amssymb}
\usepackage{enumerate,hyperref}
\usepackage{tikz}
\usetikzlibrary{cd}
\usepackage{xcolor}
\usepackage{mathrsfs}

\usepackage[pagewise]{lineno}%\linenumbers
\usepackage{comment}
\usepackage{graphicx} % Required for inserting images
\input{xypic}
\xyoption{all}
\newcommand{\R}{\mathbb{R}}
\newcommand{\Z}{\mathbb{Z}}

\newcommand{\Nr}{N_{\R}}

\newcommand{\Mr}{M_{\R}}
\newcommand{\A}{\mathbb{A}}

\newcommand{\B}{\mathfrak{B}}

\newcommand{\calE}{\mathcal{E}}
\newcommand{\calS}{\mathcal{S}}

\newcommand{\mc}[1]{\mathcal{#1}}
\renewcommand{\t}[1]{\widetilde{#1}}

\newcommand{\rar}{\rightarrow}

\newcommand{\pb}[1]{{#1}^{*}}

\newcommand{\dirlim}[2]{\mathbf{#1}^{#2}}

\numberwithin{equation}{section}

\newtheorem{theorem}[equation]{Theorem}

\newtheorem{corollary}[equation]{Corollary}
\newtheorem{proposition}[equation]{Proposition}

\theoremstyle{definition}
\newtheorem{definition}[equation]{Definition}

\theoremstyle{remark}

\newtheorem{remark}[equation]{Remark}
\newtheorem{example}[equation]{Example}

\newtheorem{numpara}[equation]{}

\title{Equivariant sheaves on toric prevarieties}
\author{Jyoti Dasgupta}
\address{Department of Mathematics and Computing, Indian Institute of Technology (Indian School of Mines) Dhanbad, 826004, India}
\email{jdasgupta.maths@gmail.com }
\author{Kartik Roy}
\address{ Department of Mathematics, Ben Gurion University of the Negev, P.O.B. 653, Be'er Sheva 84105, Israel}
\email{kartik@post.bgu.ac.il}

\subjclass[2020]{14M25, 14L30}

\keywords{Toric prevarieties, Equivariant sheaves, Vector bundles.}

\begin{document}

\begin{abstract}
Toric prevarieties are non-separated analogues of toric varieties. Perling \cite{Perling_equivariant_sheaves_tor_var} provided a combinatorial description of equivariant quasicoherent sheaves on toric varieties, extending earlier ideas of Klyachko, who outlined a general framework for equivariant torsion free sheaves in an unpublished work \cite{kly_sheaf}. In this article, we present a combinatorial description of equivariant quasicoherent sheaves on toric prevarieties. 
\end{abstract}
\maketitle
%\tableofcontents

\section{Introduction}

The theory of toric varieties offers a rich interplay between the geometric properties of algebraic varieties equipped with algebraic torus actions and the combinatorial structures underlying their construction. The study of equivariant vector bundles on toric varieties has been a focal point of research since the foundational work of Kaneyama, who characterised such bundles over nonsingular complete toric varieties \cite{kane, kaneyama2}. Later, Klyachko elegantly classified equivariant vector bundles over arbitrary toric varieties via families of decreasing filtrations on a fixed vector space, subject to compatibility conditions \cite{Klyachko_bundles_torvar}. In an unpublished work, Klyachko also sketched a generalisation of this framework to equivariant torsion free sheaves \cite{kly_sheaf}. Perling then formalised this using the notion of $\Delta$-families—structures derived from torus-eigenspace decompositions and multiplication maps—to establish an equivalence between such families and equivariant quasi-coherent sheaves \cite{Perling_equivariant_sheaves_tor_var}. For torsion free sheaves, these $\Delta$-families reduce to multifiltrations of subspaces of a fixed vector space, satisfying certain compatibility conditions. In the reflexive case, the data simplifies further to families of increasing filtration of a fixed vector space, without any compatibility condition.

Toric prevarieties are non-separated analogues of toric varieties. They occur as universal ambient spaces in algebraic geometry: every normal variety over an algebraically closed
field admits a closed embedding into a toric prevariety \cite{W}. The category of toric varieties is equivalent to the category of fans. Generalising this, A'Campo-Neuen and Hausen \cite{AH_toric_prevar} have introduced the concept of a system of fans and established an equivalence between the category of toric prevarieties and the category of the system of fans.

In this paper, we extend Perling’s combinatorial classification of equivariant sheaves from toric varieties to toric prevarieties. We provide a detailed description of equivariant locally free sheaves, examining them both through Klyachko’s filtration-theoretic framework and via the perspective of piecewise linear maps as developed by Kaveh and Manon \cite{KM22}. We also discuss the equivariant splitting of locally free sheaves.

\section{Preliminaries} 
In this section, we briefly review some basic definitions and results on toric prevarieties and torus equivariant sheaves on toric varieties, which we will use later.

\subsection{Toric prevarieties}

Let us begin by recalling the description of the category of toric prevarieties in terms of systems of fans that A’Campo-Neuen and  Hausen developed in \cite{AH_toric_prevar}.

Let  \(T \cong (k^*)^n\) be the \(n\)-dimensional algebraic torus, where \(k\) is any algebraically closed field of characteristic zero. 

\begin{definition}\cite[Definition 1.1, Definition 1.5]{AH_toric_prevar}\label{torprevardef}
A \emph{toric prevariety} is a normal integral scheme $X$ that is of finite type (but not necessarily separated) over $k$ together with a regular action of an algebraic torus $T$, and the choice of a point $x_0\in X$ such that $T\hookrightarrow X$ given by $t\mapsto t\cdot x_0$ is an open embedding. The point $x_0$ is referred to as the base point in the open orbit of $X$.

 A morphism $f\colon X\rightarrow X'$ of toric prevarieties $(X,x_0)$ and $(X',x_0')$ is said to be a \emph{toric morphism}, if $f(x_0)=x_0'$ and there is a homomorphism $\phi\colon T\rightarrow T'$ of algebraic tori such that $f(t\cdot x)=\phi(t)\cdot f(x)$ for all $t\in T$ and $x\in X$. 
\end{definition}

A toric prevariety is a (normal) toric variety in the sense of \cite{Fulton_toric_var} if and only if it is separated. The category of toric varieties is equivalent to that of fans. To provide a similar description for the category of toric prevarieties, this notion was generalised to introduce the notion of a system of fans, which we describe below(see \cite[Section 2]{AH_toric_prevar}).

Let \(M=\text{Hom}(T, k^*)  \cong \Z^n\) be the character lattice of \(T\) and \(N=\text{Hom}(M, \Z)\) be the dual lattice. We denote by \(\langle , \rangle : M \times N \rightarrow \Z \), the natural pairing between \(M\) and \(N\). Let $\Nr := N \otimes_{\Z} \R$ (resp. $\Mr := M \otimes_{\Z} \R)$ be the real vector space associated with $N$ (resp. $M$). The pairing $ M \times N \rightarrow \Z$ induces a pairing $\Mr \times \Nr \rightarrow \R$. 
We call a finite set $\Delta$ of strictly convex rational polyhedral cones in
$\Nr$ a {\it fan\/} in $\Nr$ if any two cones of $\Delta$ intersect in a
common face, and $\Delta$ contains all faces of all its cones. Let $\Delta$ be a fan in \(N_\R \) which defines a  toric variety \(X(\Delta)\) of dimension \(n\) over \( k\) under the action of \(T\). In order to fix some notations, we recall that $X(\Delta)$ is
constructed as follows:

Let $\sigma$ be a strongly convex rational polyhedral cone in $\Delta$. Its dual cone $\sigma^{\vee}$ lives in $M_\R$. The set of lattice points of $\sigma^{\vee}$ is \(\sigma_M:=\sigma^{\vee} \cap M \), an affine semigroup. Let \(U_{\sigma}:=\text{Spec }k[\sigma_M]\) be the affine toric variety corresponding to $\sigma$. The toric variety $X(\Delta)$ is constructed by gluing these affine toric varieties according to the gluing data encoded in the fan structure. A fan $\Delta$ is called {\it irreducible\/} if it consists of all the faces of a single cone $\sigma$, i.e., the associated toric variety is affine.

\begin{definition}\cite[Definition 2.1]{AH_toric_prevar} \label{def:sof}
Let $I$ be a finite indexing set. A collection $\calS=(\Delta_{ij})_{i,j\in I}$ of fans in $N_\R$ is said to be a \emph{system of fans}, if $\Delta_{ij}=\Delta_{ji}$ for all $i,j\in I$ and 
$\Delta_{ij}\cap\Delta_{jk}$ is a subfan of $\Delta_{ik}$ for all $i,j,k\in I$. 
\end{definition}

Observe that for all $i,j\in I$, the fan $\Delta_{ij}$ is a subfan of $\Delta_{ii}$.
Now, to a system of fans $\calS=(\Delta_{ij})_{i,j\in I}$, we may naturally associate a toric prevariety $X = X(\calS)$: consider the toric varieties $X_i :=X(\Delta_{ii})$ for $i\in I$ and the open subvarieties $X_{ij} :=X(\Delta_{ij})$ of $X_i$ (resp. $X_j$) for $i,j\in I$ with $i\neq j$. By definition, we have a toric isomorphism $f_{ji}\colon X_{ij}\xrightarrow{\sim} X_{ji}$ such that $f_{ki}=f_{kj}\circ f_{ji}$ for all $i,j,k\in I$. Therefore, we may glue the $X_i$'s using $f_{ij}$'s and obtain a toric prevariety $X(\calS)$.

\begin{example}
 Every toric variety is a toric prevariety where the indexing set of the system of fans is a singleton set.
\end{example}

\begin{example}\label{example_A1twoorigins}
Let $I=\{1,2\}$ and consider the one-dimensional cone $\sigma=\mathbb{R}_{\geq 0}$. We define a system of fans $\calS$ in $\mathbb{R}$ by setting $\Delta_{11}:=\{\sigma,\{0\}\}, \, \Delta_{22}:=\{\sigma,\{0\}\}$ and $\Delta_{12}:= \{\{0\}\}, \, \Delta_{21}:=\{\{0\}\}$. Then the corresponding toric prevariety $X(\calS)$ is given by taking two copies of $\A_k^1$, namely $X(\Delta_{11})$ and $X(\Delta_{22})$, and gluing them along $k^*\subseteq \A_k^1$ via the identity $k^*=X(\Delta_{12})=X(\Delta_{21})=k^*$. So $X(\calS)$ is the affine line with double origin.  
\end{example}

By Sumihiro's Theorem, every toric prevariety can be covered by finitely many torus-invariant open affine subspaces (see \cite[Proposition 1.3]{AH_toric_prevar}). In fact, every toric prevariety has two distinguished systems of charts: one is given by maximal torus-invariant separated open charts, the other by maximal torus-invariant affine open charts. Each element of the latter system of charts is an affine toric variety, which corresponds to an irreducible fan. Hence, we obtain the system of fans $\calS=(\Delta_{ij})_{i,j \in I}$ associated to the toric prevariety, whose diagonal fans $\Delta_{ii}$ are irreducible. We call this an \emph{affine} system of fans.

\begin{remark} \label{rem:Sfamilyproperties} Fix a system of fans $\calS$ indexed by $I$ and let $i, j ,k \in I$.
The following relations hold in $S$ and $X(\calS)$.
    \begin{enumerate}
        \item $\Delta_{ij}$ is a subfan of $\Delta_{ii}$ and $\Delta_{jj}$.
        \item One may not expect that $\Delta_{ij}$ and $\Delta_{ii} \cap \Delta_{jj}$ are
equal (see Example \ref{example_A1twoorigins}, Example \ref{ex:homog-spectra}).
        \item $X_{ij} = X(\Delta_{ij}) = X_i \cap X_j = X(\Delta_{ii}) \cap X(\Delta_{jj})$. 
        \item $\Delta_{ij} \cap \Delta_{jk} = \Delta_{jk} \cap \Delta_{ik} = \Delta_{ij} \cap \Delta_{ik}$ by Definition \ref{def:sof}, corresponding to the toric variety $X_{ijk}:= X_i \cap X_j \cap X_k$. 
        %(Simple proof of equalities:  $\Delta_{ij} \cap \Delta_{jk} = \Delta_{ij} \cap \Delta_{jk} \cap \Delta_{ik} = \Delta_{ij} \cap \Delta_{ik}$ Since $\Delta_{ij} \cap \Delta_{ik}$ is a subfan of $\Delta_{jk}$.)
    \end{enumerate}
We denote by 
$$\beta^{ijk}_{ij}: X_{ijk}  \longrightarrow X_{ij} \quad
\beta^{ij}_{i}: X_{ij} \longrightarrow X_i \quad \text{and } \beta^{ijk}_i := \beta^{ij}_{i} \circ \beta^{ijk}_{ij}$$
the equivariant open embeddings induced by the system of fans.
\end{remark}

Consider the set $\mathfrak{F}(\calS)
:= \{(\sigma,i); \; i \in I,\sigma \in \Delta_{ii}\}$ of labelled
cones associated to the system of fans $\calS$. We have a
natural equivalence relation on $\mathfrak{F}(\calS)$, given by
$$(\sigma,i) \sim (\tau,j) \quad \Longleftrightarrow \quad
\sigma =\tau \in\Delta_{ij}\,.$$
We call this equivalence relation the {\it
  glueing relation\/} of $\calS$, and we denote the set of equivalence
classes by $\Omega(\calS)$. The equivalence class of an element
$(\sigma, i) \in \mathfrak{F}(\calS)$ is denoted by $[\sigma,i] \in \Omega(\calS)$.
Furthermore, the face relation of cones induces a partial ordering on the set $\Omega(\calS)$, namely
$$[\tau, j ] \preceq [\sigma, i] \textrm{ if } \tau \textrm{ is a face of } \sigma \textrm{ and } [\tau, i]=[\tau, j].$$

The orbit-cone correspondence for toric varieties generalises to the set-up of toric prevarieties in the following fashion. For every $[\sigma,i] \in \Omega(\calS)$, there is a
{\it distinguished point} $x_{(\sigma,i)}$ in the corresponding $T$-orbit in $X_i$. In the toric prevariety $X(\calS)$ a point $x_{(\sigma,i)}$ is identified with $x_{(\tau,j)}$ if
and only $[\sigma,i]=[\tau,j]$. In fact, we have a bijection from $\Omega(\calS)$ to the set of $T$-orbits of the toric
  prevariety $X(\calS)$, given by $[\sigma,i] \mapsto T \cdot x_{[\sigma,i]}$.

The point $x_{0} := x_{[\{0\},i]}$ corresponding to the open $T$-orbit
will be considered as the {\it base point} of $X(\calS)$.  For a
distinguished point $x_{[\sigma,i]}$ of $X(\calS)$, we define
$X_{[\sigma,i]}$ to be the open affine $T$-invariant subspace of
$X(\calS)$ that contains $T \cdot x_{[\sigma,i]}$ as closed subset. A point $x_{[\sigma, i]}$ lies in the closure of the orbit $T 
 \cdot x_{[\tau,j]}$ if and only if $[\tau, j] \preceq [\sigma, i]$. In
  particular, one has
  $$
  X_{[\sigma,i]} = \bigcup_{[\tau, j] \preceq [\sigma, i]} T \cdot
  x_{[\tau,j]} = \bigcup_{[\tau, j] \preceq [\sigma, i]} X_{[\tau,j]}.
  $$
We have the following notion of morphism in the category of system of fans.
\begin{definition}\cite[Definition 3.1]{AH_toric_prevar}
   Let $\calS=(\Delta_{ij})_{i,j\in I}$ and $ \calS'=(\Delta'_{ij})_{i,j\in I'}$ be two system of fans in the vector spaces $N_\R$ and $N'_{\R}$, associated to the lattices $N$ and $N'$, respectively. A \emph{morphism of system of fans} $\calS \rar \calS'$ is a pair $(F, \mathfrak{f})$ such that $F: N \rar N'$ is a map of lattices, and $\mathfrak{f}: \Omega(\calS) \rar \Omega(\calS')$ is a map such that the following hold:
   \begin{enumerate}
       \item if $[\tau, j] \preceq [\sigma, i]$, then $\mathfrak{f}([\tau, j]) \preceq \mathfrak{f}([\sigma, i])$, i.e., $\mathfrak{f}$ is order-preserving.
       \item if $\mathfrak{f}([\sigma, i])=[\sigma', i']$, then $F_{\R}((\sigma)^{\circ}) \subset (\sigma')^{\circ}$, where $F_{\R} : N_{\R} \rar N'_{\R}$ is the linear map induced from $F$. Here $\sigma^\circ$ denotes the relative interior of the cone $\sigma$.
   \end{enumerate}
\end{definition}
\noindent
With this notion, we have the following equivalence:
\begin{theorem}\cite[Theorem 3.6]{AH_toric_prevar}
    The category of system of fans is equivalent to the category of toric prevarieties. The restriction of the functor to the full subcategory of affine system of fans also establishes an equivalence to the category of toric prevarieties.
\end{theorem}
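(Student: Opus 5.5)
The plan is to build a functor $\mathcal{X}$ from systems of fans to toric prevarieties, check that it is fully faithful, and then check that it is essentially surjective. On objects, $\mathcal{X}(\calS)=X(\calS)$ is the gluing construction described above. For a morphism $(F,\mathfrak{f})\colon \calS\rar\calS'$, the lattice map $F$ gives a homomorphism of tori $\phi\colon T\rar T'$. For each class $[\sigma,i]$ with $\mathfrak{f}([\sigma,i])=[\sigma',i']$, condition (2) gives $F_\R(\sigma)\subset\sigma'$, so $F$ induces a toric morphism $U_\sigma=X_{[\sigma,i]}\rar X'_{[\sigma',i']}=U_{\sigma'}$. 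These local maps all restrict to $\phi$ on the open orbit $T\subset X(\calS)$. Since they agree on a dense open subset and the targets are separated affine charts, they agree on overlaps (as $T$ is dense and each chart is integral). They therefore glue to a toric morphism $X(\calS)\rar X(\calS')$. Condition (1) ensures compatibility with specialisations of orbits, so that the chart $X_{[\tau,j]}\subset X_{[\sigma,i]}$ maps into the correct chart.

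Next I show that $\mathcal{X}$ is fully faithful. Let $f\colon X(\calS)\rar X(\calS')$ be toric with torus homomorphism $\phi$. Then $\phi$ determines $F=\phi_*\colon N\rar N'$ uniquely. Since $f$ is equivariant, it maps $T$-orbits to $T'$-orbits. Using the orbit correspondence $\Omega(\calS)\leftrightarrow\{T\text{-orbits}\}$, define $\mathfrak{f}([\sigma,i])$ to be the class of the orbit containing $f(x_{[\sigma,i]})$. Order preservation follows because $f$ is continuous, so orbit closures map into orbit closures. Condition (2) comes from the separated case: restricting $f$ to $X_{[\sigma,i]}\rar X'_{\mathfrak{f}[\sigma,i]}$ gives a toric morphism of affine toric varieties. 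For such maps, the classical criterion via limits of one-parameter subgroups says that $\lim_{t\to0}\lambda_v(t)x_0\in T\cdot x_{[\sigma,i]}$ exactly for $v\in\sigma^\circ$. Applying this to $F(v)$ gives $F_\R(\sigma^\circ)\subset(\sigma')^\circ$. Faithfulness holds because $f$ is determined by $\phi$ on the dense torus, given that the target is... Here lies the main subtlety, since the target is not separated. I would handle it by noting that the chart containing the image of each orbit is pinned down by $\mathfrak{f}$, and on that affine chart the map is determined by $F$. This shows that $f$ is recovered from $(F,\mathfrak{f})$.

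For essential surjectivity, let $X$ be a toric prevariety. Sumihiro's theorem gives a finite cover by $T$-invariant affine charts $X_i$, each toric and hence of the form $U_{\sigma_i}$. Each intersection $X_i\cap X_j$ is a $T$-invariant open subset of a toric variety, and so equals $X(\Delta_{ij})$ for a subfan $\Delta_{ij}$. The cocycle condition gives $\Delta_{ij}\cap\Delta_{jk}\subset\Delta_{ik}$ because $X_i\cap X_j\cap X_k\subset X_i\cap X_k$. Hence $X\cong X(\calS)$ with $\calS$ an affine system of fans. This proves the last assertion simultaneously: the full subcategory of affine systems already hits every isomorphism class, so the restricted functor is also an equivalence.

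The step I expect to be the main obstacle is the faithfulness/gluing argument in the non-separated setting. Two maps into a non-separated target that agree on the torus need not agree everywhere; the line with double origin is the standard example. The fix is to systematically use $\mathfrak{f}$ to specify into which affine chart each orbit goes, and only then invoke separatedness of the affine charts.
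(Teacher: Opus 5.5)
The paper gives no proof of this statement; it is quoted from \cite[Theorem 3.6]{AH_toric_prevar}, so there is no in-paper argument to compare with. Your outline is the standard argument and is correct in substance. You define the functor chartwise, prove full faithfulness via the induced map on orbits and rigidity of maps into affine charts, get essential surjectivity from a finite invariant affine cover, and deduce the affine statement by restricting to a full subcategory that meets every isomorphism class. Two steps need to be stated more carefully.

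First, the gluing sentence ``the targets are separated affine charts, so they agree on overlaps'' is not valid as written. The maps $f_{[\sigma,i]}$ and $f_{[\sigma_2,i_2]}$ land in different charts of $X(\calS')$, and their union need not be separated. What makes it work is the following. The overlap $X_{[\sigma,i]}\cap X_{[\sigma_2,i_2]}$ is the union of the $X_{[\tau,j]}$ with $[\tau,j]\preceq[\sigma,i]$ and $[\tau,j]\preceq[\sigma_2,i_2]$. For each such $[\tau,j]$, condition (1) places both $f_{[\tau,j]}$ and $f_{[\sigma,i]}|_{X_{[\tau,j]}}$ in the single affine chart $X'_{\mathfrak{f}[\sigma,i]}$. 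There, agreement on the dense torus forces equality.

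Second, the paragraph you label ``faithfulness'' actually proves fullness: it shows $f=\mathcal{X}(F,\mathfrak{f})$ for the pair extracted from $f$. Faithfulness is the other composite, namely that $(F,\mathfrak{f})$ can be read off from $\mathcal{X}(F,\mathfrak{f})$. $F$ is recovered from $\phi$. $\mathfrak{f}$ is recovered because, for $v\in\sigma^\circ\cap N$, the chart map sends $x_{[\sigma,i]}=\lim_{t\to0}\lambda_v(t)x_0$ to $\lim_{t\to0}\lambda_{F(v)}(t)x_0'$. That limit lies in the orbit of $\mathfrak{f}[\sigma,i]$ precisely because of condition (2). This check is exactly where the relative-interior form of (2) is needed, as opposed to the weaker inclusion $F_\R(\sigma)\subset\sigma'$ that suffices for your construction. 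Without it, different $\mathfrak{f}$ could induce the same morphism; for example, $X=T$ mapping to $\A^1$ with $\mathfrak{f}([\{0\}])$ sent to the class of the ray.

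Finally, two points on essential surjectivity. The invariant affine cover in the non-separated setting is \cite[Proposition 1.3]{AH_toric_prevar}. You should also say why $\Delta_{ij}$ is a common subfan of both $\Delta_{ii}$ and $\Delta_{jj}$: the fan of the toric variety $X_i\cap X_j$ (with the base point $x_0$) is intrinsic to it, not to the ambient chart.
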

 From now on, we assume that our system of fans is affine.

\subsection{Equivariant sheaves on toric varieties}

 In this subsection, we briefly recall Perling's \cite{Perling_equivariant_sheaves_tor_var} combinatorial description of equivariant sheaves on toric varieties.

\subsubsection{Quasicoherent sheaves}

Let $X\,=\,X(\Delta)$  be a toric variety corresponding to a fan $\Delta$ and $\mathcal{E}$ an equivariant quasicoherent sheaf on $X$. Let \(U_{\sigma}:=\text{Spec }k[\sigma_M]\) be the affine toric variety corresponding to a cone $\sigma \in \Delta$. The \(T\)-action on $\mathcal{ E }$ induces an action of \(T\) on the \(k[\sigma_M]\)-module \(E^{\sigma}:=\Gamma(U_{\sigma}, \mathcal{E})\) and consequently, the \(T\)-isotypical decomposition \(E^{\sigma}=\bigoplus\limits_{m \in M} E^{\sigma}_{m}\). 
%, which makes \(E^{\sigma}\) naturally an \(M\)-graded \(k[\sigma_M]\)-module as follows. 
%Recall that the action of the torus on \(U_{\sigma}\) induces the \(T\)-isotypical decomposition $k[\sigma_M]=\bigoplus\limits_{m \in \sigma_M} k \chi^m$. 
 The \(M\)-graded \(k[\sigma_M]\)-module structure on \(E^{\sigma}\) is given by the following multiplication: \(\chi^{\sigma}_{m,m'} : E^{\sigma}_{m} \rightarrow E^{\sigma}_{m'}, ~ e \mapsto \chi^{m'-m} \cdot e,\) where \(m, m' \in M\) and \(m'-m \in \sigma_M\).

Motivated by this, for each $\sigma \in \Delta$, Perling defined the directed preorder $\le_{\sigma}$ on the lattice $M$ as follows:
    for $m, m' \in M$, we say that $m \le_{\sigma} m'$ if and only if $m' - m \in \sigma_M$. We write $m < m'$ if $m \le_{\sigma} m'$, but not $m' \le_{\sigma} m$.
\begin{comment}

We have the following results:
\begin{enumerate}
    \item $\le_{\sigma}$ defines a directed preorder on M.
    \item $m \le_{\sigma} m'$ and $m' \le_{\sigma} m$ iff $m' - m \in \sigma^{\perp}$.
    \item If $\tau \preceq \sigma$, then $m \le_{\sigma} m'$ implies $m \le_{\tau} m'$.
    \item If $\sigma$ is of maximal dimension then $\le_{\sigma}$ is an order.
\end{enumerate}
\end{comment}
%The directed preorder $\le_{\sigma}$ on $M$ induces the following category of diagrams on the category of $k$-vector spaces. 
%Each diagram is called a $\sigma$-family. 

\begin{definition}\cite[Definition 5.2, Definition 5.4]{Perling_equivariant_sheaves_tor_var}\label{def:sigmaFamily} A $\sigma$\emph{-family}, denoted by $\hat{E}^{\sigma}$, is a family \{$E^{\sigma}_m\}_{m \in M}$ of $k$-vector spaces together with a linear homomorphism $\chi^{\sigma}_{m,m'} : E^{\sigma}_m \longrightarrow E^{\sigma}_{m'}$  whenever $m \le_{\sigma} m'$ in $M$, such that the following hold:
    \begin{enumerate}
        \item  $\chi^{\sigma}_{m,m} = 1$
        \item $\chi^{\sigma}_{m',m''}\circ\chi^{\sigma}_{m,m'} = \chi^{\sigma}_{m,m''}$ for any triple $m \le_{\sigma} m' \le_{\sigma} m''$ in $M$. 
    \end{enumerate}

   Let $\hat{E}^{\sigma}$ and $\hat{F}^{\sigma}$ be two $\sigma$-families with $k$-linear homomorphisms $\chi^{\sigma}_{m,m'}$ and $\psi^{\sigma}_{m,m'}$, respectively. A morphism from $\hat{E}^{\sigma}$ to $\hat{F}^{\sigma}$ is a collection of linear homomorphisms \{$\phi^{\sigma}_m : E^{\sigma}_m \longrightarrow F^{\sigma}_m\}_{m \in M}$ such that $$\phi^{\sigma}_{m'}\circ \chi^{\sigma}_{m,m'} = \psi^{\sigma}_{m,m'} \circ \phi^{\sigma}_m$$ whenever $m \le_{\sigma} m'$ in $M$.
\end{definition}

The category of $\sigma$-families is a $k$-linear abelian category. We have the following important result on the equivalence of categories.

   \begin{proposition}\cite[Proposition 5.5]{Perling_equivariant_sheaves_tor_var} \label{pro:sigmafamilyEquivalence}
     The following categories are equivalent:
     \begin{enumerate}
         \item The category of equivariant quasicoherent sheaves over $U_{\sigma}$,
         \item The category of $M$-graded $k[\sigma_M]$ modules with morphisms of degree 0,
         \item The category of $\sigma$-families.
     \end{enumerate}
   \end{proposition}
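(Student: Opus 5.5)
I will establish the two equivalences (1)$\Leftrightarrow$(2) and (2)$\Leftrightarrow$(3) separately, then compose them.

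For (1)$\Leftrightarrow$(2), I use that $U_\sigma=\operatorname{Spec}k[\sigma_M]$ is affine. Quasicoherent sheaves on $U_\sigma$ correspond to $k[\sigma_M]$-modules via $\mathcal{E}\mapsto E^\sigma=\Gamma(U_\sigma,\mathcal{E})$, with inverse $E\mapsto\widetilde{E}$. An equivariant structure is an isomorphism $\phi\colon a^*\mathcal{E}\xrightarrow{\sim} p^*\mathcal{E}$ on $T\times U_\sigma$ satisfying the cocycle condition, where $a$ is the action and $p$ the projection. Since $T\times U_\sigma=\operatorname{Spec}(k[M]\otimes k[\sigma_M])$ is affine, $\phi$ is equivalent to a comodule coaction $E^\sigma\to k[M]\otimes_k E^\sigma$. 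This coaction is compatible with the coaction $\chi^m\mapsto \chi^m\otimes\chi^m$ on $k[\sigma_M]$, and the cocycle condition becomes coassociativity and counitality. Comodules over the group algebra $k[M]$ are exactly $M$-graded vector spaces, with $E_m=\{e : \text{coaction}(e)=\chi^m\otimes e\}$. Compatibility with the module structure says $\chi^{m'}E_m\subseteq E_{m+m'}$, i.e.\ $E^\sigma$ is an $M$-graded $k[\sigma_M]$-module. Equivariant morphisms correspond to comodule maps, which are exactly the degree-$0$ module maps. No finiteness hypothesis on $E^\sigma$ is needed, because the grading argument works for arbitrary comodules.

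For (2)$\Leftrightarrow$(3), given a graded module $E=\bigoplus_m E_m$, I set $E^\sigma_m:=E_m$ and $\chi^\sigma_{m,m'}(e):=\chi^{m'-m}e$ whenever $m'-m\in\sigma_M$. The axioms of a $\sigma$-family then follow from $\chi^0=1$ and $\chi^{a}\chi^{b}=\chi^{a+b}$. Conversely, given a $\sigma$-family, I define $E:=\bigoplus_m E^\sigma_m$ and let $\chi^u$, for $u\in\sigma_M$, act on $E_m$ by $\chi^\sigma_{m,m+u}$. Here $m\le_\sigma m+u$ holds precisely because $u\in\sigma_M$. The functoriality axiom gives associativity of this action, and $\chi^\sigma_{m,m}=1$ gives unitality; extending linearly yields a $k[\sigma_M]$-module. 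Morphisms of families are exactly graded module maps commuting with multiplication by the $\chi^u$. The two constructions are mutually inverse on the nose, so this is in fact an isomorphism of categories.

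The main obstacle is the first step: carefully translating the cocycle condition on $T\times T\times U_\sigma$ into coassociativity, and checking that the compatibility of $\phi$ with the $\mathcal{O}$-module structure yields the graded multiplication rule rather than some twisted version of it. I expect this to be a direct computation with pullbacks of modules along ring maps between affine schemes.
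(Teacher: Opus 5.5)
Your proposal is correct and follows essentially the same route as the source: the paper cites Perling, and its surrounding text sketches exactly this argument (take global sections on the affine $U_\sigma$, use the $T$-isotypical decomposition to get an $M$-grading, and read off $\chi^\sigma_{m,m'}$ as multiplication by $\chi^{m'-m}$), with the passage from graded modules to $\sigma$-families being a direct translation. The one point you leave implicit, that the map $a^*\mathcal{E}\to p^*\mathcal{E}$ built from a coaction is automatically an isomorphism, is immediate here because each $\chi^m$ is a unit in $k[M]$.
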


Let $\tau$ be a face of $\sigma$, denoted by $\tau \preceq \sigma$. Let \(i_{\tau \sigma} : U_{\tau} \hookrightarrow U_{\sigma}\) be the inclusion associated to the face relation. Let \(\widehat{E}^{\sigma}\) be a $\sigma$-family and \(E^{\sigma}:=\bigoplus_{m \in M} E^{\sigma}_m \) be the corresponding \(M\)-graded \(k[{\sigma}_M]\)-module. The pullback \(i_{\tau \sigma}^*E^{\sigma}=E^{\sigma} \otimes_{k[{\sigma}_M] } k[{\tau}_M]\) is naturally an \(M\)-graded \(k[{\tau}_M]\)-module (see \cite[Section 5.2]{Perling_equivariant_sheaves_tor_var}) and hence corresponds to a $\tau$-family by Proposition \ref{pro:sigmafamilyEquivalence}, which we denote by \(i_{\tau \sigma}^* \widehat{E}^{\sigma}\). Perling introduced the following notion of $\Delta$-families for a fan $\Delta$.

\begin{definition} \cite[Definition 5.8]{Perling_equivariant_sheaves_tor_var} \label{def:DeltaFamily} A collection $\{\widehat{E}^{\sigma} \}_{\sigma \in \Delta}$ of $\sigma$-families is called a $\Delta$\emph{-family}, denoted by $\widehat{E}^{\Delta}$, if for each pair $\tau \preceq \sigma$, there exists an isomorphism of families $\eta_{\tau \sigma}: i_{\tau \sigma}^{*}(\widehat{E}^{\sigma}) \cong \widehat{E}^{\tau} $ such that, for each triple $\rho \preceq \tau \preceq \sigma$, there is an equality $\eta_{\rho \sigma}=\eta_{\rho \tau} \circ  i_{\rho \tau}^*\eta_{\tau \sigma}$.\\
	A morphism of $\Delta$-families is a collection of morphisms $\{ \widehat{\phi}^{\sigma}: \widehat{E}^{\sigma} \rightarrow \widehat{F}^{\sigma}\}_{\sigma \in \Delta}$ such that for all $\sigma, \tau \in \Delta$ and $\tau \preceq \sigma$, the following diagram commutes:
	
	\begin{center}
		{\footnotesize
			\begin{tikzpicture}[description/.style={fill=white,inner sep=2pt}]
			
			\matrix (m) [matrix of math nodes, row sep=3em,
			column sep=3em, text height=1.5ex, text depth=0.25ex]
			{ i_{\tau \sigma}^{*}(\widehat{E}^{\sigma})  & &  i_{\tau \sigma}^{*}(\widehat{F}^{\sigma}) \\
				\widehat{E}^{\tau} & & \widehat{F}^{\tau}.\\ };
			%\draw[double,double distance=5pt] (m-1-1) – (m-1-3);
			\path[->] (m-1-1) edge node[auto] {$ i_{\tau \sigma}^{*} \widehat{\phi}^{\sigma}$}(m-1-3);
			\path[->] (m-2-1) edge node[below] {$\widehat{\phi}^{\tau}$}(m-2-3);
			\path[->] (m-1-1) edge node[auto] {$ \eta^E_{\tau \sigma}$}(m-2-1);
			\path[->] (m-1-3) edge node[auto] {$\eta^F_{\tau \sigma} $} (m-2-3);
			
			\end{tikzpicture}
		}
	\end{center}
	
\end{definition}

\begin{theorem}\label{the:TorEqvDelFamAndShvs} \cite[Theorem 5.9]{Perling_equivariant_sheaves_tor_var}
The category of $\Delta$-families is equivalent to the category of equivariant quasicoherent sheaves on the toric variety \(X(\Delta)\).
\end{theorem}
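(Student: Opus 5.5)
We prove Theorem \ref{the:TorEqvDelFamAndShvs} by gluing the affine equivalences of Proposition \ref{pro:sigmafamilyEquivalence} along the face poset of $\Delta$. Equivariant quasicoherent sheaves satisfy descent for the Zariski cover $\{U_\sigma\}_{\sigma\in\Delta}$, since $T$-equivariance (the isomorphism $\alpha^*\mathcal{E}\cong \mathrm{pr}_2^*\mathcal{E}$ on $T\times X$ with its cocycle) is local on $X$. Because $U_\sigma\cap U_{\sigma'} = U_{\sigma\cap\sigma'}$ and $\sigma\cap\sigma'$ is a common face, such descent data can be rewritten entirely in terms of face inclusions $\tau\preceq\sigma$. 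Concretely, an equivariant quasicoherent sheaf on $X(\Delta)$ is equivalent to a collection of equivariant sheaves $\mathcal{E}^\sigma$ on $U_\sigma$, together with equivariant isomorphisms $i_{\tau\sigma}^*\mathcal{E}^\sigma\cong\mathcal{E}^\tau$ for $\tau\preceq\sigma$ satisfying the cocycle condition along chains $\rho\preceq\tau\preceq\sigma$.

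The plan has three steps.
\begin{enumerate}
\item \emph{Sheaf to $\Delta$-family.} Given $\mathcal{E}$, set $\widehat{E}^\sigma$ to be the $\sigma$-family attached to $\Gamma(U_\sigma,\mathcal{E})$. Restriction gives canonical isomorphisms $\Gamma(U_\sigma,\mathcal{E})\otimes_{k[\sigma_M]}k[\tau_M]\cong\Gamma(U_\tau,\mathcal{E})$, because $U_\tau$ is a principal open subset of $U_\sigma$ defined by a character $\chi^m$ with $m\in\sigma_M\cap\tau^\perp$ in the relative interior of the face. These isomorphisms are $M$-graded, so they give the $\eta_{\tau\sigma}$. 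Transitivity of restriction yields $\eta_{\rho\sigma}=\eta_{\rho\tau}\circ i_{\rho\tau}^*\eta_{\tau\sigma}$. Morphisms of sheaves restrict to compatible graded maps, so this is a functor.
\item \emph{$\Delta$-family to sheaf.} Given a $\Delta$-family, Proposition \ref{pro:sigmafamilyEquivalence} produces equivariant sheaves $\mathcal{E}^\sigma$ on each $U_\sigma$. For two cones $\sigma,\sigma'$ with $\tau=\sigma\cap\sigma'$, define the gluing isomorphism on $U_\tau$ as $\eta_{\tau\sigma'}^{-1}\circ\eta_{\tau\sigma}$. The cocycle condition on triple overlaps $U_{\sigma\cap\sigma'\cap\sigma''}$ reduces, via the compatibility $\eta_{\rho\sigma}=\eta_{\rho\tau}\circ i^*_{\rho\tau}\eta_{\tau\sigma}$, to a chase in which every map is expressed through the face $\rho=\sigma\cap\sigma'\cap\sigma''$. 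Since all gluing maps are equivariant, the glued sheaf is equivariant.
\item \emph{The two functors are mutually quasi-inverse.} This is immediate from the affine equivalence together with full faithfulness of descent. Fullness and faithfulness on morphisms follow because a morphism of sheaves is determined by, and can be glued from, its restrictions to the $U_\sigma$, which correspond to the $\widehat{\phi}^\sigma$ making the squares in Definition \ref{def:DeltaFamily} commute.
\end{enumerate}

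The main obstacle is the middle step. Definition \ref{def:DeltaFamily} only records isomorphisms along face inclusions, not along arbitrary overlaps, and we must show this data suffices for a genuine cocycle. The key input is that $\Delta$ is a fan, so pairwise intersections are faces of both cones. We must also check that the equivariant structure (the $M$-grading) is compatible with the pullback $i_{\tau\sigma}^*$, that is, that localization of a graded module is graded. We would verify the triple-overlap identity first, reducing it entirely to the chain condition for $\rho\preceq\tau\preceq\sigma$.
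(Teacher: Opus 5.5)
Your argument is correct. You apply Proposition \ref{pro:sigmafamilyEquivalence} on each $U_\sigma$ and use $U_\sigma\cap U_{\sigma'}=U_{\sigma\cap\sigma'}$ to turn the face isomorphisms $\eta_{\tau\sigma}$ into gluing data, with the triple-overlap cocycle reducing to the chain condition at $\rho=\sigma\cap\sigma'\cap\sigma''$; this is the standard gluing argument, and the paper simply quotes the statement from Perling without giving a proof. It is also the strategy the paper uses for its own $\calS$-family generalisation: there the affine charts $X_i$ play the role of your $U_\sigma$, and the possibly non-affine overlaps $X_{ij}$ are handled by this very theorem.
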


\begin{remark} \label{rem:pullbackfamily}
By the above theorem, for any toric morphism $f \colon X(\Delta') \rightarrow X(\Delta)$ of toric varieties, any equivariant quasicoherent sheaf $\calE$ on $X(\Delta)$ corresponds to a $\Delta$-family, say $\hat{E}^{\Delta}$, and the pullback sheaf $f^*(\calE)$ on $X(\Delta')$ corresponds to a $\Delta'$-family, which we denote by $f^*(\hat{E}^{\Delta})$.

\end{remark}

%\subsubsection{Coherent sheaves on toric varieties}
To describe coherent sheaves, we need to impose certain finiteness conditions on the families.
\begin{definition}\cite[Definition 5.10]{Perling_equivariant_sheaves_tor_var}\label{def:finSigmaFamily}
    A $\sigma$-family $\widehat{E}^{\sigma}$ is \emph{finite} if 
    \begin{enumerate}
        \item Each vector space $E^{\sigma}_{m}, m \in M$ is finite dimensional,
        \item for each chain $\cdots <_{\sigma} m_{i - 1 } <_{\sigma} m_i <_{\sigma} \cdots $ of characters in $M$ there exists $i_0 \in \Z$ such that $E^{\sigma}_i = 0 $ for all $i <_{\sigma} i_0$,
        \item there are only finitely many vector spaces $E^{\sigma}_m$ such the induced map 

        $$
        \bigoplus_{m' <_{\sigma} m} E^{\sigma}_{m'} \longrightarrow E^{\sigma}_m
        $$
        is not surjective.
    \end{enumerate}
    A $\Delta$-family $\{\widehat{E}^{\sigma}\}_{\sigma \in \Delta}$ is \emph{finite} if each $\sigma$-family $\widehat{E}^{\sigma}, \sigma \in \Delta$ is finite as in Definition \ref{def:finSigmaFamily}.
\end{definition}

\begin{proposition}\cite[Proposition 5.11]{Perling_equivariant_sheaves_tor_var}\label{pro:cohDeltaFamily}
    Let $\Delta$ be a fan and $X(\Delta)$ be the associated toric variety. Then a quasicoherent equivariant sheaf on $X(\Delta)$ is coherent if and only if the associated $\Delta$-family is finite.
\end{proposition}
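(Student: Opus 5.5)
The statement just quoted is Perling's Proposition \ref{pro:cohDeltaFamily}: a quasicoherent equivariant sheaf on $X(\Delta)$ is coherent if and only if its $\Delta$-family is finite. The plan is to reduce to the affine case and then work with graded modules. Coherence is local, and $X(\Delta)$ is covered by the affine charts $U_\sigma$, $\sigma\in\Delta$. So $\mathcal{E}$ is coherent if and only if each $E^\sigma=\Gamma(U_\sigma,\mathcal{E})$ is a finitely generated $k[\sigma_M]$-module. Finiteness of a $\Delta$-family is also checked cone by cone. It therefore suffices to prove that an $M$-graded $k[\sigma_M]$-module $E^\sigma$ is finitely generated exactly when the $\sigma$-family $\widehat{E}^\sigma$ is finite. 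We pass between modules and families using Proposition \ref{pro:sigmafamilyEquivalence}.

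\textbf{Finite family implies finitely generated module.} Condition (3) of Definition \ref{def:finSigmaFamily} gives a finite set $S\subset M$ of degrees where new elements appear. Condition (1) says each $E^\sigma_m$ with $m\in S$ is finite dimensional, so we pick a basis of each. We claim that these finitely many homogeneous elements generate $E^\sigma$. Take any $m$ and argue by descent along $<_\sigma$. If $m\notin S$, then $E^\sigma_m$ is the image of $\bigoplus_{m'<_\sigma m}E^\sigma_{m'}$. Each $E^\sigma_{m'}$ in turn is either spanned by images from still smaller degrees or has $m'\in S$. Condition (2) prevents an infinite strictly descending chain of nonzero spaces, so the descent terminates. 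A noetherian-induction style argument then shows $E^\sigma_m$ lies in the submodule generated by the chosen basis elements.

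\textbf{Finitely generated module implies finite family.} Take finitely many homogeneous generators $e_1,\dots,e_r$ of degrees $m_1,\dots,m_r$. For (1), note that $E^\sigma_m$ is spanned by the elements $\chi^{m-m_i}e_i$ with $m-m_i\in\sigma_M$. There is at most one such element per $i$, so $\dim E^\sigma_m\le r$. For (3), the map $\bigoplus_{m'<_\sigma m}E^\sigma_{m'}\to E^\sigma_m$ is surjective unless $m$ is one of the $m_i$, up to $\sigma^\perp$-equivalence. Some care is needed where $\le_\sigma$ is only a preorder. For (2), $E^\sigma_m\neq0$ forces $m\in\bigcup_i (m_i+\sigma_M)$. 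A strictly descending $<_\sigma$-chain inside a finite union of translates of $\sigma^\vee\cap M$ must eventually leave that union. The reason is that a strict decrease in $\le_\sigma$ strictly lowers the value of $\langle\cdot,v\rangle$ for some $v$ in the relative interior of $\sigma$ that is integral on $M$. These values are bounded below by $\min_i\langle m_i,v\rangle$ on the union and are integers, so the chain cannot descend forever.

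The main obstacle is the interplay between strictness and the non-antisymmetric preorder when $\sigma$ is not full-dimensional. Elements differing by $\sigma^\perp$ are $\le_\sigma$-equivalent, and $\chi^{m'-m}$ is invertible between them. Conditions (2) and (3) must be read so that equivalent degrees do not spoil the descent argument. I would handle this by choosing $v\in\sigma^\circ\cap N$. Then $\langle m'-m,v\rangle>0$ whenever $m<_\sigma m'$, so the integer value $\langle\cdot,v\rangle$ gives a well-founded height function on the relevant degrees, and both implications can be run by induction on this height.
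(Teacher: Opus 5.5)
Your proof is correct. The paper gives no argument of its own for this statement; it is quoted from Perling, so there is no internal proof to compare with. Your route is the standard argument behind the citation: localize to the charts $U_\sigma$, then show that an $M$-graded $k[\sigma_M]$-module is finitely generated exactly when conditions (1)--(3) hold. You use homogeneous generators in one direction and well-founded descent along $<_\sigma$ in the other.

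Two small remarks.

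First, your decision to read condition (3) modulo $\sigma^\perp\cap M$ is forced, not cosmetic. For $\sigma=\{0\}$ the strict relation $<_{\{0\}}$ is empty. So for $\mathcal{O}_{U_{\{0\}}}=\mathcal{O}_T$, every degree $m\in M$ is a non-surjective degree under the literal reading, and the proposition would fail.

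Second, your last paragraph suggests running the induction on the height $\langle\cdot,v\rangle$ rather than on $<_\sigma$. If you do this in the direction ``finite $\Rightarrow$ finitely generated'', you must first check that the height is bounded below on $\{m : E^\sigma_m\neq 0\}$. This follows from (2) and (3). Below the least height of the finitely many non-surjective classes, every nonzero $E^\sigma_m$ would force a nonzero $E^\sigma_{m'}$ with $m'<_\sigma m$, yielding an infinite descending chain. Your well-founded induction in the second paragraph already avoids this issue.
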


\subsubsection{Torsion free sheaves on toric varieties}

Let $\mathcal{E}$ be an equivariant torsion free coherent sheaf on \(X(\Delta)\) and let $\widehat{E}^{\Delta}$ be its associated finite $\Delta$-family. For each $\sigma \in \Delta$, the $\sigma$-family $\widehat{E}^{\sigma}$ is a directed family of vector spaces. Let \(\mathbf{E}^{\sigma}:= \varinjlim\limits_{m \in M} E_m^{\sigma}\). In particular, for each $\sigma$-family $\widehat{E}^{\sigma}$ and relation $m \leq_{\sigma} m'$, we have the following commutative diagram

\begin{equation*}
    \begin{tikzcd}
        E^{\sigma}_m \arrow[rd,hook] \arrow[r, hook, "\chi^{\sigma}_{m,m'}"]  & E^{\sigma}_{m'} \arrow[d, hook] \\
        & \dirlim{E}{\sigma}
    \end{tikzcd}
\end{equation*}
Since $\mathcal{E}$ is torsion free, the maps in the above diagram are injective for all $\sigma \in \Delta$ (see \cite[Proposition 5.13]{Perling_equivariant_sheaves_tor_var}). Moreover, for a face $\tau \preceq \sigma$ the morphism $\widehat{E}^{\sigma} \hookrightarrow (i_{\tau \sigma})^{*} \widehat{E}^{\sigma}$ of $\sigma$-families is injective, as the restriction morphism $\Gamma (U_{\sigma}, \mathcal{E}) \hookrightarrow \Gamma (U_{\tau}, \mathcal{E})$ is injective.
Consequently the composition $\widehat{E}^{\sigma}\hookrightarrow i_{\tau \sigma}^* \widehat{E}^{\sigma} \stackrel{\sim}{\rightarrow} \widehat{E}^{\tau}
$
is injective (see Definition \ref{def:DeltaFamily}, \cite[Proposition 5.14]{Perling_equivariant_sheaves_tor_var}), which further induces a natural injection 
$\mathbf{E}^{\sigma}  \hookrightarrow \mathbf{E}^{\tau}$.
% \begin{proposition}\cite[Proposition 5.14]{Perling_equivariant_sheaves_tor_var}
%     Let $\mathcal{E}$ be a coherent sheaf on $X(\Delta)$ with its $\Delta$-family $\widehat{E}^{\Delta}$. If $\mathcal{E}$ is torsion-free then the homomorphisms $\widehat{E}^{\sigma} \rightarrow \widehat{E}^{\tau}$ are injections for any $\tau \preceq \sigma$ where $\tau, \sigma \in \Delta$.
% \end{proposition}
Now the injections $\widehat{E}^{\sigma} \rightarrow \widehat{E}^{\tau}$ induce injective maps $\dirlim{E}{\sigma} \rightarrow \dirlim{E}{\tau}$. Actually, these
inclusions are isomorphisms by the following corollary.

\begin{corollary}\cite[Corollary 5.16]{Perling_equivariant_sheaves_tor_var}
    The injections $\widehat{E}^{\sigma} \rightarrow \widehat{E}^{0}$ induce isomorphisms $\dirlim{E}{\sigma}  \cong \dirlim{E}{0} $.
\end{corollary}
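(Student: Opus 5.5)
The plan is to identify $\dirlim{E}{\sigma}$ with the stalk of $\mathcal{E}$ at the generic point. We already know from the discussion above (via \cite[Proposition 5.14]{Perling_equivariant_sheaves_tor_var}) that the induced map $\dirlim{E}{\sigma}\rightarrow\dirlim{E}{0}$ is injective, so only surjectivity remains. Here $\widehat{E}^{0}$ is the family attached to the zero cone, i.e.\ to the open orbit $U_0=T=\operatorname{Spec} k[M]$.

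First I describe the limits concretely. For $\tau=0$ the preorder $\le_0$ is total equivalence, since $0_M=M$. Hence every map $\chi^0_{m,m'}$ is an isomorphism, and $\dirlim{E}{0}\cong E^0_m$ for any $m$. For general $\sigma$, the directed limit $\dirlim{E}{\sigma}=\varinjlim_m E^\sigma_m$ is the degree-zero part of the localization of $E^\sigma$ at the multiplicative set $\{\chi^u : u\in\sigma_M\}$, where an element of $E^\sigma_m$ is sent to $\chi^{-m}e$. Choose $u$ in the relative interior of $\sigma_M$, so that $u$ is orthogonal to nothing except $\sigma^\perp$ directions. Then inverting $\chi^u$ in $k[\sigma_M]$ gives $k[M]$, because every $m\in M$ satisfies $m+ ku\in\sigma^\vee$ for $k\gg0$; this standard fact holds since $u$ lies in the interior of $\sigma^\vee$ when $\sigma$ is strictly convex. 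Consequently $E^\sigma\otimes_{k[\sigma_M]}k[M]=E^\sigma_{\chi^u}$, and this module is $i_{0\sigma}^*E^\sigma\cong E^0$ by the $\Delta$-family isomorphism $\eta_{0\sigma}$.

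Next comes surjectivity. Take $e\in E^0_m$. Under $\eta_{0\sigma}^{-1}$ it has the form $\chi^{-ku}e'$ with $e'\in E^\sigma_{m+ku}$. In the directed system for $\sigma$, the class of $e'$ in $\dirlim{E}{\sigma}$ maps to the class of $\chi^{ku}\cdot e$ in $E^0_{m+ku}$. In $\dirlim{E}{0}$ that class is identified with $e$ via the isomorphism $\chi^0_{m,m+ku}$. Hence every element of $\dirlim{E}{0}$ is hit.

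The main obstacle is bookkeeping. I must check that the map $\widehat{E}^\sigma\to\widehat{E}^0$ used above, namely the composite of the canonical map to the pullback with $\eta_{0\sigma}$, is compatible with the identification of direct limits via the transition maps $\chi$. Concretely, I need to verify that $E^\sigma_{m'}\to E^0_{m'}$ commutes with multiplication by characters. This holds because $\eta_{0\sigma}$ is a morphism of $M$-graded modules, and the localization map is $k[\sigma_M]$-linear.

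Two small points remain. The choice of $u$ uses only that $\sigma$ is strictly convex, which guarantees that the dual cone $\sigma^\vee$ is full-dimensional, and this is automatic in a fan. Torsion-freeness is needed only for the injectivity already established.
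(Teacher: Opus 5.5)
Your proof is correct and is essentially the standard argument behind Perling's Corollary 5.16, which the paper only cites without proof. The argument: the pullback of $\widehat{E}^{\sigma}$ to $T$ is the localization $E^{\sigma}_{\chi^u}$ at a lattice point $u$ in the interior of $\sigma^\vee$, so by cofinality every class in $\mathbf{E}^{0}$ comes from some $E^{\sigma}_{m+ku}$.

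Two small remarks. First, the condition you need on $u$ is just $\langle u,v\rangle>0$ for all nonzero $v\in\sigma$. Your phrase ``relative interior of $\sigma_M$'' and the remark about ``$\sigma^\perp$ directions'' are garbled, although you state the right condition immediately afterwards. Second, your identification of $\mathbf{E}^{\sigma}$ with the degree-zero part of $E^{\sigma}_{\chi^u}\cong E^{0}$ already gives injectivity as well. So torsion-freeness is not actually needed for this isomorphism of limits, only for realising the $E^{\sigma}_m$ as subspaces of $\mathbf{E}^{0}$.
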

Thus, all the vector spaces in the $\Delta$-family \(\widehat{E}^{\Delta}\) can be realised as vector subspaces of $\mathbf{E}^0$. This leads to the following definition of a family of multifiltrations. 

\begin{definition}\label{multfilt}\cite[Definition 5.17]{Perling_equivariant_sheaves_tor_var}
	Let $\Delta$ be a fan, \(V\) a finite-dimensional \(k\)-vector space, and let for each \(\sigma \in \Delta \) a set of vector subspaces \(\{E^{\sigma}_m\}_{m \in M } \) of \(V\) be given. We say that this system is a \emph{family of multifiltrations} of \(V\) if: 
	\begin{enumerate}
		\item[(i)] For each \(\sigma \in \Delta \) and \(m \leq_{\sigma } m' \), \(E^{\sigma}_m\) is contained in \(E^{\sigma}_{m'}\).
		\item[(ii)] \(V=\bigcup\limits_{m \in M} E^{\sigma}_{m} \) for each $\sigma \in \Delta$.
		\item[(iii)] For each chain \(\ldots <_{\sigma} { m_{i-1}} <_{\sigma} { m_i} <_{\sigma} \ldots \) of characters in \(M\) there exists an \(i_0 \in \Z \) such that \(E^{\sigma}_{m_i}=0\) for  \(i < i_0\).
		\item[(iv)] For every $\sigma \in \Delta$ there exist only finitely many vector spaces \(E^{\sigma}_m\) such that \(E^{\sigma}_m \nsubseteq  \sum\limits_{m' {<_{\sigma}} m} E^{\sigma}_{m'}\).
		\item[(v)] $($Compatibility condition.$)$ For each $\tau \preceq \sigma$ with \(S_{\tau} = S_{\sigma} + \Z_{\geq 0}(-m_{\tau} )\) we consider with respect to the
		preorder \(\leq_{\sigma }\) the ascending chains \(m + i \cdot m_{\tau}\) for \(i \geq 0\). By condition (iv) and because \(V\) is finite dimensional the sequence of subvector spaces \(E^{\sigma}_{m+ i \cdot m_{\tau}}\) necessarily becomes stationary for some \(i^{\tau}_m \in \Z \). We require that \(E^{\tau}_m= E^{\sigma}_{m+ i^{\tau}_m \cdot m_{\tau}}\) for all \(m \in M\).
	\end{enumerate}	

A \emph{morphism between families of multifiltrations} \(\{E^{\sigma}_m\}_{m \in M } \) and \(\{F^{\sigma}_m\}_{m \in M } \) is a homomorphism of the corresponding ambient vector spaces $\phi : \mathbf{E}^0 \rightarrow \mathbf{F}^0$ which is compatible with these multifiltratons, i.e.  $\phi(E^{\sigma}_m) \subseteq F^{\sigma}_m$.
\end{definition}

\begin{theorem}\cite[Theorem 5.18]{Perling_equivariant_sheaves_tor_var}\label{thm:Per torsion free}
    The category of equivariant torsion free coherent sheaves on the toric variety $X(\Delta)$ is equivalent to the category of families of multifiltrations of finite-dimensional vector spaces indexed by cones of $\Delta$.
\end{theorem}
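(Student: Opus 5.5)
We build mutually inverse functors between equivariant torsion free coherent sheaves on $X(\Delta)$ and families of multifiltrations, using Theorem \ref{the:TorEqvDelFamAndShvs} and Proposition \ref{pro:cohDeltaFamily}. These two results identify equivariant coherent sheaves with finite $\Delta$-families. It therefore suffices to prove that finite $\Delta$-families coming from torsion free sheaves are the same as families of multifiltrations.

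\emph{From sheaves to multifiltrations.} Take $\mathcal{E}$ torsion free and let $\widehat{E}^{\Delta}$ be its finite $\Delta$-family. By \cite[Proposition 5.13]{Perling_equivariant_sheaves_tor_var}, all maps $\chi^{\sigma}_{m,m'}$ are injective. By \cite[Proposition 5.14]{Perling_equivariant_sheaves_tor_var} and Corollary 5.16, each $E^{\sigma}_m$ embeds into $V:=\mathbf{E}^0$. This gives subspaces $E^\sigma_m\subseteq V$, and the conditions are checked one at a time:
\begin{enumerate}
\item[(i)] holds because the transition maps become inclusions.
\item[(ii)] is the definition of the direct limit together with $\mathbf{E}^\sigma\cong\mathbf{E}^0$.
\item[(iii)] and (iv) are the finiteness conditions of Definition \ref{def:finSigmaFamily}.
\end{enumerate}
$V$ is finite dimensional because $\mathbf{E}^0$ is the generic fibre module over the torus. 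It is a finitely generated torsion free $M$-graded $k[M]$-module, hence free of finite rank, and its degree-$0$ piece has dimension equal to that rank.

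Condition (v) is the main point. For $\tau\preceq\sigma$, choose $m_\tau\in\sigma_M$ with $\tau=\sigma\cap m_\tau^\perp$, so that $k[\tau_M]=k[\sigma_M][\chi^{-m_\tau}]$. Then $(i^*_{\tau\sigma}E^\sigma)_m=\varinjlim_i E^\sigma_{m+i m_\tau}$, a localisation computed degreewise. Inside $V$ this is the union $\bigcup_i E^{\sigma}_{m+im_\tau}$, which stabilises by finite dimensionality. The isomorphism $\eta_{\tau\sigma}$ is compatible with the embeddings into $V$, since all embeddings factor through restriction to the torus. Hence $E^\tau_m=E^\sigma_{m+i^\tau_m m_\tau}$.

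\emph{From multifiltrations to sheaves.} Given a family of multifiltrations, define a $\sigma$-family with inclusions as the maps $\chi^\sigma_{m,m'}$. Condition (i) makes it a $\sigma$-family, and (iii), (iv) together with $\dim V<\infty$ make it finite. Define $\eta_{\tau\sigma}$ by the same localisation computation, now read backwards via (v). The cocycle condition is automatic because every map is an inclusion in the single space $V$. By Theorem \ref{the:TorEqvDelFamAndShvs} and Proposition \ref{pro:cohDeltaFamily} this yields a coherent equivariant sheaf. It is torsion free because each $E^\sigma=\bigoplus_m E^\sigma_m$ embeds into $V\otimes k[M]$: the degree-$m$ piece maps to $V\chi^m$ via the inclusion. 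A submodule of a free $k[M]$-module is torsion free over the domain $k[\sigma_M]$.

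\emph{Morphisms.} A morphism of $\Delta$-families induces a map on the direct limits $\mathbf{E}^0\to\mathbf{F}^0$ that respects the subspaces. Conversely, a compatible linear map $\phi$ restricts to each $E^\sigma_m$ and commutes with the inclusions and with the $\eta$'s. The two constructions are inverse up to natural isomorphism, since a torsion free family is recovered from its images in $\mathbf{E}^0$. Faithfulness uses injectivity of $E^\sigma_m\to\mathbf{E}^0$.

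The main obstacle is (v) together with the compatibility of $\eta_{\tau\sigma}$ with the embeddings into $V$. There we must verify carefully that the pullback $i^*_{\tau\sigma}$ on $\sigma$-families is exactly the direct limit along $m_\tau$. We also need that $\sigma_M+\Z_{\ge0}(-m_\tau)=\tau_M$, which is the standard fact that $\tau^\vee\cap M$ is the localisation of $\sigma^\vee\cap M$ at $-m_\tau$.
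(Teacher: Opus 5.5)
Your proposal is correct, and it is essentially the argument the paper sketches just before citing Perling's Theorem 5.18; the paper gives no proof of its own. In both, Propositions 5.13 and 5.14 and Corollary 5.16 place the torsion free $\Delta$-family inside $\mathbf{E}^0$, and condition (v) comes from computing the localisation $i^*_{\tau\sigma}$ degreewise. The only loose step is your reason for $\dim \mathbf{E}^0<\infty$: $M$-graded $k[M]$-modules are free whether or not they are torsion free, and more simply, finiteness condition (1) for the cone $\{0\}$ gives it directly, since all transition maps of the $\{0\}$-family are isomorphisms.
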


\begin{remark}
    The above classification is related to the characterisation of torsion free sheaves
in \cite{kly_sheaf}. For details, see \cite[Section 5.5]{Perling_equivariant_sheaves_tor_var}. 
\end{remark}
\begin{remark} \label{rem:pullbackfiltration}
Let $\Delta'$ be a subfan of $\Delta$ such that the induced morphism $f:X(\Delta') \hookrightarrow X(\Delta)$ is an open embedding. By the above theorem, any equivariant torsion free coherent sheaf $\calE$ on $X(\Delta)$ corresponds to a family of multifiltration of a finite-dimensional vector space $\mathbf{E}^0$ indexed by cones of $\Delta$, say $\{E^{\sigma}_m\}_{m \in M}$. The restriction sheaf $\calE|_{X(\Delta')}$ on $X(\Delta')$ is again torsion free and corresponds to a family of multifiltrations of $\mathbf{E}^0$ indexed by cones of $\Delta'$, which we denote by $\{(f^*E)^{\sigma'}_m\}_{m \in M}$. 
\end{remark}

\subsubsection{Reflexive sheaves on toric varieties}

Let $\mathcal{ E }$ be an equivariant reflexive sheaf on \(X=X(\Delta)\). For each $\sigma \in \Delta$, we consider the open subset $\cup_{\rho \in \sigma(1)}U_{\rho}$ of $U_{\sigma}$, where $\sigma(1)$ is the collection of one-dimensional cones or rays of $\sigma$. By the orbit-cone correspondence, we know that the complement of this open subset has codimension at least $2$. Then, by reflexivity of $\mathcal {E}$, we have
$\Gamma(U_{\sigma}, \mathcal{E})_{m} \cong \bigcap_{\rho \in \sigma(1)} \Gamma( U_{\rho}, \mathcal{E})_{m} $
for each $m \in M$. In terms of multifiltrations of the limit vector space $\mathbf{E}^0$, this becomes $E^{\sigma}_{m} = \bigcap_{\rho \in \sigma(1)} E^{\rho}_{m}.$ Thus, an equivariant reflexive sheaf is completely determined by the family of multifiltrations \(\{E^{\rho}_m\}_{m \in M, \rho \in \Delta(1)}\) of the vector space $\mathbf{E}^0$ where $\Delta(1)$ is the collection of rays of the fan $\Delta$. Note that there is a canonical identification of \(M / ({\rho}^{\perp} \cap M)\) with \(\Z\) via the map \(m \mapsto \langle m, v_{\rho} \rangle \), where $v_{\rho}$ is the primitive ray generator of $\rho$. Hence, identifying  \(E^{\rho}_m=E^{\rho}(\langle m, v_{\rho} \rangle)\), we get an increasing full (integer) filtration of $\mathbf{E}^{0}$ given by
$ 0 \subseteq\hspace{4pt} \ldots \subseteq E^{\rho}(i) \subseteq E^{\rho}(i + 1) \subseteq \ldots \hspace{4pt}\subseteq \mathbf{E}^{0}.$ In fact, we have the following equivalence of categories:

\begin{theorem}\cite[Theorem 5.19]{Perling_equivariant_sheaves_tor_var}\label{the:reflexivshv}
    The category of equivariant reflexive sheaves on a toric variety $X$ is equivalent to the category of vector spaces with full filtration associated to each ray in its fan. The morphisms in this category are vector space homomorphisms which are compatible with filtrations in the $\Delta$-family sense.
\end{theorem}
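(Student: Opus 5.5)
We construct functors in both directions and check that they are mutually quasi-inverse. The starting point is Theorem \ref{thm:Per torsion free}: an equivariant reflexive sheaf $\calE$ on $X=X(\Delta)$ is in particular torsion free and coherent, so it corresponds to a family of multifiltrations $\{E^{\sigma}_m\}$ of $V=\mathbf{E}^0$. For each ray $\rho\in\Delta(1)$, condition (i) of Definition \ref{multfilt} says that $E^{\rho}_m$ depends only on the class of $m$ in $M/(\rho^\perp\cap M)\cong\Z$. Indeed, if $m'-m\in\rho^\perp$ then $m\le_\rho m'$ and $m'\le_\rho m$. This gives an increasing filtration $E^\rho(i)$ of $V$. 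It is full because of condition (ii) together with finite-dimensionality, and it vanishes for $i\ll 0$ by condition (iii). The functor $\calE\mapsto (V,\{E^\rho(\bullet)\}_{\rho\in\Delta(1)})$ is then defined, and it is faithful, since a morphism of multifiltrations is determined by $\phi:\mathbf{E}^0\to\mathbf{F}^0$.

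Next comes the reflexive reconstruction. For every $\sigma$ we show $E^\sigma_m=\bigcap_{\rho\in\sigma(1)}E^\rho_m$ inside $V$. The inclusion $\subseteq$ follows from the injectivity of the restriction maps $\Gamma(U_\sigma,\calE)\hookrightarrow\Gamma(U_\rho,\calE)$. For the reverse inclusion, set $U'=\bigcup_{\rho\in\sigma(1)}U_\rho$. Its complement in $U_\sigma$ has codimension $\ge 2$, so reflexivity (normality plus Hartogs for reflexive sheaves) gives $\Gamma(U_\sigma,\calE)=\Gamma(U',\calE)$. The sheaf condition on the cover $\{U_\rho\}$, whose pairwise intersections are the torus $U_0$, identifies $\Gamma(U',\calE)_m$ with $\bigcap_\rho E^\rho_m\subseteq E^0_m=V$. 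This already gives full faithfulness. A linear map $\phi$ compatible with the ray filtrations is automatically compatible with all the $E^\sigma_m$, since these are intersections, and hence it is a morphism of multifiltrations.

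The main obstacle is essential surjectivity. Given $V$ with full filtrations $E^\rho(\bullet)$, we define $E^\sigma_m:=\bigcap_{\rho\in\sigma(1)}E^\rho(\langle m,v_\rho\rangle)$ and $E^0_m=V$. We then have to verify conditions (i)--(v) of Definition \ref{multfilt}:
\begin{itemize}
\item (i) holds because $m\le_\sigma m'$ gives $\langle m,v_\rho\rangle\le\langle m',v_\rho\rangle$ for every ray $\rho$ of $\sigma$.
\item (ii) and (iii) follow from the fullness and boundedness of each filtration.
\item (iv) holds because each filtration has only finitely many jumps, so $E^\sigma_m$ takes finitely many values, and new subspaces appear only at the finitely many "corner" degrees determined by the jump indices.
\item (v) holds because moving along $m+i\,m_\tau$ with $m_\tau\in\tau^\perp$ in the relative interior of $\sigma^\vee\cap\tau^\perp$ leaves $\langle\cdot,v_\rho\rangle$ fixed for $\rho\in\tau(1)$ and drives it to $+\infty$ for the other rays of $\sigma$. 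The stationary value is therefore $\bigcap_{\rho\in\tau(1)}E^\rho_m=E^\tau_m$.
\end{itemize}
Theorem \ref{thm:Per torsion free} then produces a torsion free coherent sheaf $\calE$.

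It remains to show that $\calE$ is reflexive. Let $j:X^{(1)}\hookrightarrow X$ be the inclusion of the union of the orbits of codimension $\le 1$. The natural map $\calE\to j_*j^*\calE$ is injective because $\calE$ is torsion free. Its graded pieces on $U_\sigma$ are exactly $E^\sigma_m\to\bigcap_\rho E^\rho_m$, which is an equality by construction, so the map is an isomorphism. Finally, on the normal variety $X$ the sheaf $\calE$ is reflexive: over the complement of a codimension-$\ge2$ subset, $\calE$ is torsion free on smooth codimension-one orbit charts $U_\rho\cong k^*{}^{n-1}\times\A^1$ and hence locally free there, so $j_*j^*\calE$ is reflexive. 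This completes the equivalence.
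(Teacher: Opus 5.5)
Your forward direction and full-faithfulness argument are the same as the reasoning the paper gives just before the statement. Hartogs on $\bigcup_{\rho\in\sigma(1)}U_\rho$ gives $E^\sigma_m=\bigcap_{\rho\in\sigma(1)}E^\rho_m$, so a reflexive sheaf and its morphisms are determined by the ray filtrations.

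For essential surjectivity you take a different route from the one the paper uses in its prevariety analogue, Theorem~\ref{thm:relexive}. There, each filtration becomes a locally free sheaf on $U_\rho$; these are glued over the torus to a sheaf on $X^{(1)}$, which is then pushed forward. You instead define $E^\sigma_m$ as intersections, verify axioms (i)--(v) of Definition~\ref{multfilt}, get a coherent torsion-free sheaf from Perling's theorem, and check $\mathcal{E}\cong j_*j^*\mathcal{E}$ degreewise. Your route gets coherence from axiom (iv); the pushforward route must supply coherence of $j_*$ separately, and the paper leaves this implicit. In exchange, the pushforward route gets reflexivity at once and skips (i)--(v). The two routes meet: the graded pieces of $j_*$ over $U_\sigma$ are exactly your $\bigcap_\rho E^\rho_m$, so coherence of $j_*$ is precisely your axiom (iv).

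Two justifications in your proposal need repair.

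First, ``torsion free on the smooth chart $U_\rho\cong(k^*)^{n-1}\times\A^1$, hence locally free'' is false for $n\ge 2$: the ideal sheaf of a closed point is a counterexample. You must use equivariance. The non-locally-free locus of a torsion-free coherent sheaf on a normal variety is closed of codimension $\ge 2$ and $T$-stable. The only $T$-stable closed subsets of $U_\rho$ are $\emptyset$, $O(\rho)$ and $U_\rho$, so that locus is empty. Alternatively, a single $\Z$-filtration of $V$ has an adapted basis, which makes $\bigoplus_m E^\rho_m$ free over $k[\rho_M]$.

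Second, your ``corner degree'' argument for (iv) works for simplicial $\sigma$. There an element of $\sigma_M$ vanishing on all rays but one lets you lower a single coordinate $\langle m,v_\rho\rangle$. For non-simplicial $\sigma$ no such element need exist, so the argument fails as stated. A uniform argument:
\begin{itemize}
\item For each nonzero value $W$ of $E^\sigma_\bullet$, put $c_\rho(W)=\min\{s: W\subseteq E^\rho(s)\}$.
\item Then $P_W=\{m: W\subseteq E^\sigma_m\}$ is the set of lattice points of the rational polyhedron $\{u:\langle u,v_\rho\rangle\ge c_\rho(W)\ \text{for all}\ \rho\in\sigma(1)\}$, whose recession cone is $\sigma^\vee$.
\item By Gordan's lemma, $k[P_W]$ is a finitely generated $k[\sigma_M]$-module.
\item $\bigoplus_m E^\sigma_m$ is the image of $\bigoplus_W W\otimes_k k[P_W]$, hence finitely generated, which is what (iv) encodes.
\end{itemize}
With these two repairs your proof is complete.
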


\subsubsection{Locally free sheaves on toric varieties}

Note that locally free sheaves are reflexive and therefore given by filtrations of some vector space as in Theorem \ref{the:reflexivshv}.
The following equivalence of categories is due to Klyachko, which was also recovered by Perling's description.
\begin{theorem} \cite{Klyachko_bundles_torvar}\label{thm:klyachko}
    The category of vector bundles on a toric variety $X({\Delta})$ is equivalent to the category of vector spaces with family of increasing filtrations $(E,\{E^{\rho}(i)\}_{\rho \in \Delta(1), i \in \Z})$ which satisfy the following compatibility condition:

\noindent
    For each $\sigma \in \Delta$, there exists a $T$-eigenspace decomposition $E = \bigoplus_{m \in M/\sigma^\perp \cap M} E^{\sigma}_{m}$ such that
    $$E^{\rho}(i) = \sum_{m \in M/\sigma^\perp \cap M, \langle m, v_{\rho} \rangle \leq i} E^{\sigma}_{m}.$$
\end{theorem}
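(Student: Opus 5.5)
Since vector bundles are locally free and hence reflexive, I would build on Theorem \ref{the:reflexivshv}. A vector bundle $\calE$ on $X(\Delta)$ is then the same thing as a finite-dimensional space $E=\mathbf{E}^0$ together with full increasing filtrations $\{E^{\rho}(i)\}$, one for each ray $\rho\in\Delta(1)$, with morphisms the filtration-compatible linear maps. So the task is to show that local freeness corresponds exactly to the compatibility condition in the statement. The functor is the one from Theorem \ref{the:reflexivshv}, restricted to the full subcategory of vector bundles. Full faithfulness is inherited from that theorem, so only the essential image needs to be identified. Since local freeness is local and $X(\Delta)$ is covered by the $U_\sigma$, it suffices to prove the following: an equivariant reflexive sheaf on the affine toric variety $U_\sigma$ is locally free if and only if $E$ admits the decomposition $E=\bigoplus_{[m]\in M/(\sigma^\perp\cap M)}E^\sigma_{[m]}$ described in the statement.

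For the direction ``compatible $\Rightarrow$ locally free'', I would treat the decomposition as a sum of line bundles. For each class $[m]$ consider the character sheaf $\mathcal{O}_{U_\sigma}\cdot\chi^{-m}$, which is well defined because $\chi^{u}$ is a unit on $U_\sigma$ for $u\in\sigma^\perp\cap M$. Its filtration on $k$ for the ray $\rho$ jumps exactly at $i=\langle m,v_\rho\rangle$; the sign convention has to be fixed so that this matches the paper's $E^\rho_{m}=E^\rho(\langle m,v_\rho\rangle)$. Taking $\bigoplus_{[m]}E^\sigma_{[m]}\otimes\mathcal{O}\chi^{-m}$ then gives a free equivariant sheaf whose ray filtrations are $\sum_{\langle m,v_\rho\rangle\le i}E^\sigma_{[m]}$. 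By full faithfulness in Theorem \ref{the:reflexivshv}, the given sheaf is isomorphic to this free one.

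For the converse, let $\calE$ be locally free on $U_\sigma$. I would first show that it is equivariantly free. The module $E^\sigma=\Gamma(U_\sigma,\calE)$ is a finitely generated $M$-graded projective $k[\sigma_M]$-module. Let $x_\sigma$ be the distinguished point and $\mathfrak m$ the graded ideal generated by the $\chi^u$ with $u\in\sigma_M\setminus\sigma^\perp$. The fibre $E^\sigma/\mathfrak m E^\sigma$ is graded by $M$ and is free over $k[\sigma^\perp\cap M]$. Choose homogeneous elements lifting a homogeneous basis of it. The graded Nakayama lemma applies, because condition (iii) of Definition \ref{multfilt} guarantees that the grading is bounded below along chains. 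It shows these lifts generate $E^\sigma$, and a rank count shows they form a homogeneous basis. Hence $E^\sigma\cong\bigoplus_j k[\sigma_M]\chi^{m_j}$ equivariantly, and computing the ray filtrations of each summand reproduces the required decomposition, with $E^\sigma_{[m]}$ spanned by the basis vectors of degree class $[m]$.

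The main obstacle is this equivariant splitting step. One has to arrange that the chosen homogeneous basis is well defined modulo $\sigma^\perp\cap M$, which reflects the fact that $U_\sigma$ need not have a fixed point. I would handle this by splitting $\sigma_M$ as $(\sigma^\perp\cap M)\oplus$ a pointed part: choose a splitting of $M\to M/(\sigma^\perp\cap M)$ and reduce to the full-dimensional cone in the quotient lattice, where $x_\sigma$ is a genuine fixed point and graded Nakayama applies cleanly. The final step is to glue over the cover $\{U_\sigma\}$, which is immediate because the ray filtrations are global data.
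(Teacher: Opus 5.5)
Your argument is correct. The paper gives no proof of this statement: it is quoted from Klyachko, with the remark that Perling's description recovers it, and your proposal is exactly such a recovery.

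You restrict the equivalence of Theorem \ref{the:reflexivshv} to the full subcategory of locally free sheaves. Full faithfulness and gluing then come for free, and only the essential image has to be identified chart by chart. This is also the pattern the paper follows in its proof of Theorem \ref{th:klyachko-prevar}. There, the fact that an equivariant vector bundle on an affine toric variety is equivariantly free is simply asserted; your graded Nakayama plus rank-count step supplies it.

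Klyachko's original argument works directly with the bundle. It uses that an equivariant bundle on $U_\sigma$ is equivariantly $U_\sigma\times E$ for a linear $T$-action on the fibre $E$ over the base point, and reads the filtrations off the resulting character decompositions, without passing through reflexive sheaves. That route is self-contained; yours is shorter but rests on Perling's reflexive classification.

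Two small remarks.

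First, graded Nakayama is most cleanly justified by picking $v\in\sigma^{\circ}\cap N$. The grading $\chi^u\mapsto\langle u,v\rangle$ makes $k[\sigma_M]$ nonnegatively graded, with degree-zero part $k[\sigma^\perp\cap M]$ and positive part exactly your $\mathfrak{m}$. Hence every finitely generated graded module is bounded below, in particular the cokernel of the map from the free module on your lifts, and Nakayama applies directly. You then need not extract this from the chain condition of Definition \ref{multfilt}(iii).

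Second, the splitting of $M$ in your last paragraph is unnecessary. Every $M$-graded $k[\sigma^\perp\cap M]$-module is already free on homogeneous elements, because multiplication by the units $\chi^u$, $u\in\sigma^\perp\cap M$, identifies all graded pieces within a coset of $\sigma^\perp\cap M$. The degrees of such a basis are determined only modulo $\sigma^\perp\cap M$, which is exactly why the decomposition in the statement is indexed by $M/(\sigma^\perp\cap M)$; no fixed point is needed.
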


\section{Equivariant sheaves on toric prevarieties}

In this section, we generalise Perling's description to equivariant sheaves on toric prevarieties.

\subsection{Quasicoherent sheaves on toric prevarieties}
In this subsection, we give a combinatorial description of equivariant quasicoherent sheaves on toric prevarieties. The following definition is along the lines of Definition \ref{def:DeltaFamily}, and uses Remark \ref{rem:pullbackfamily}.
\begin{comment}

\textcolor{red}{
Let $X$ be a toric prevariety associated with a system of fans $\calS=(\Delta_{ij})_{i,j\in I}$.  For $i, j \in I$, we have the toric variety $X_{ij}=X(\Delta_{ij})$ and we denote $X_i := X_{ii}$. Moreover, $X_{ij} = X_{ji}$. We denote the equivariant open embeddings $f^{i}_{ij}: X_{ij} \hookrightarrow X_i$ and $f^{j}_{ij}: X_{ij} \hookrightarrow X_j$.
Let $\calE$ be an equivariant sheaf on $X$. Then, for each $i, j \in I$, we have the restricted equivariant sheaf $\calE \mid_{X_{ji}}$. Consider the pullback sheaf $f_{ij}^*(\calE \mid_{X_{ji}})$ on $X_{ij}$. By definition, there exists an isomorphism $$\eta^{\calS}_{ij} \colon f_{ij}^*(\calE \mid_{X_{ji}}) \xrightarrow{\sim} \calE \mid_{X_{ij}}.$$ Furthermore, we have $f_{ik}=f_{jk}\circ f_{ij}$ for all $i,j,k\in I$. Hence, we have the equality $$\eta^{\calS}_{ik}=\eta^{\calS}_{ij} \circ f_{ij}^*\eta^{\calS}_{jk}.$$ 
We define 
$$
\phi_{ij}:= f_{{*}_{ijk}}
$$
}

Remark \ref{rem:pullbackfamily} motivates us to define the following.
\end{comment}  

\begin{definition} \label{def:Sfam} Let $I$ be a finite set and $\calS = \{\Delta_{ij} : i,j \in I \}$ be an affine system of fans. A collection $\{\widehat{E}^{\Delta_{ii}} \}_{i \in I}$ of $\Delta$-families is called an $\calS$\emph{-family}, denoted by $\widehat{E}^{\calS}$, if for each $i,j \in I$, there exists an isomorphisms of $\Delta_{ij}$-families 

$$(\beta^{ij}_{i})^{*} (\widehat{E}^{\Delta_{ii}})  \xrightarrow[\cong]{\eta^{ij}} (\beta^{ij}_{j})^{*} (\widehat{E}^{\Delta_{jj}})$$
over $X_{ij}=X(\Delta_{ij})=X(\Delta_{ii}) \cap X(\Delta_{jj})$ and for each triple $i,j,k$ there is an equality 

$$\pb{(\beta^{ijk}_{ik})} ( \eta^{ik}) = \pb{(\beta^{ijk}_{jk})} ( \eta^{jk}) \circ  \pb{(\beta^{ijk}_{ij})} ( \eta^{ij})$$
of morphisms of $\Delta_{ij} \cap \Delta_{jk}$-families over $X_{ijk}$ (see Remark \ref{rem:Sfamilyproperties} and Remark \ref{rem:pullbackfamily}). Sometimes we say ``$\eta^{ik} = \eta^{jk} \circ \eta^{ij}$ over $X_{ijk}$'' for the last equality above.\\

A \emph{morphism of $\calS$-families} is a collection of morphisms of $\Delta$-families $\{ \widehat{\phi}^{\Delta_{ii}}: \widehat{E}^{\Delta_{ii}} \rightarrow \widehat{F}^{\Delta_{ii}}\}_{i \in I }$ 
such that for all $i,j \in I$, the following diagram commutes:
	
	\begin{center}
		{\footnotesize
			\begin{tikzpicture}[description/.style={fill=white,inner sep=2pt}]
			
			\matrix (m) [matrix of math nodes, row sep=3em,
			column sep=3em, text height=1.5ex, text depth=0.25ex]
			{ (\beta^{ij}_{i})^{*}(\widehat{E}^{\Delta_{ii}})  & &  (\beta^{ij}_{i})^{*}(\widehat{F}^{\Delta_{ii}}) \\
				(\beta^{ij}_{j})^{*}(\widehat{E}^{\Delta_{jj}})  & & (\beta^{ij}_{j})^{*}(\widehat{F}^{\Delta_{jj}}).\\ };
			%\draw[double,double distance=5pt] (m-1-1) – (m-1-3);
			\path[->] (m-1-1) edge node[auto] {$ (\beta^{ij}_{i})^{*} \widehat{\phi}^{\Delta_{ii}}$}(m-1-3);
			\path[->] (m-2-1) edge node[below] {$(\beta^{ij}_{j})^{*} \widehat{\phi}^{\Delta_{jj}}$}(m-2-3);
			\path[->] (m-1-1) edge node[auto] {$ \eta^{ij}_{E}$}(m-2-1);
			\path[->] (m-1-3) edge node[auto] {$\eta^{ij}_{F} $} (m-2-3);
			
			\end{tikzpicture}
		}
	\end{center}
	
\end{definition}
\noindent
We have the following generalisation of Theorem \ref{the:TorEqvDelFamAndShvs} to toric prevarieties. 

\begin{theorem}
Let $\calS$ be an affine system of fans and $X(\calS)$ the associated toric prevariety. The category of $\calS$-families is equivalent to the category of equivariant quasicoherent sheaves on the toric prevariety \(X(\calS)\).
\end{theorem}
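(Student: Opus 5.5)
The plan is to reduce the statement to Theorem \ref{the:TorEqvDelFamAndShvs} chart by chart, and then glue using descent for the open cover $\{X_i\}_{i\in I}$ of $X(\calS)$.

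\textbf{Step 1: gluing description of sheaves.} Since $X(\calS)$ is obtained by gluing the $X_i=X(\Delta_{ii})$ along the toric isomorphisms of the $X_{ij}$, an equivariant quasicoherent sheaf $\calE$ on $X(\calS)$ is the same as the following data:
\begin{itemize}
\item an equivariant quasicoherent sheaf $\calE_i$ on each $X_i$;
\item equivariant isomorphisms $\theta^{ij}\colon \calE_i|_{X_{ij}}\to \calE_j|_{X_{ij}}$;
\item the cocycle condition $\theta^{ik}=\theta^{jk}\circ\theta^{ij}$ on $X_{ijk}$.
\end{itemize}
Morphisms are families of equivariant morphisms compatible with the $\theta^{ij}$. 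This is ordinary Zariski gluing of quasicoherent sheaves, and it needs no separatedness. Equivariance is also local and glues: the $T$-linearisation is an isomorphism $\mu^*\calE\cong p^*\calE$ on $T\times X$. The cover $\{T\times X_i\}$ is $T$-stable, and the restrictions of $\mu$ and $p$ to $T\times X_{ij}$ land in $X_{ij}$. Hence the linearisations on the charts, which commute with the $\theta^{ij}$, glue, and their cocycle conditions can be checked on each chart. I would state this as a lemma identifying $\mathrm{QCoh}^T(X(\calS))$ with the category of such descent data.

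\textbf{Step 2: apply Perling chartwise.} Fix, for each fan, the equivalence $\Phi_\Delta$ of Theorem \ref{the:TorEqvDelFamAndShvs}. The key compatibility is that for an open embedding $\beta\colon X(\Delta')\hookrightarrow X(\Delta)$ coming from a subfan, there is a natural isomorphism $\Phi_{\Delta'}(\calF|_{X(\Delta')})\cong \beta^*\Phi_\Delta(\calF)$. In fact Remark \ref{rem:pullbackfamily} essentially defines $\beta^*$ of a family this way. For a subfan this is concrete: the $\sigma$-families for $\sigma\in\Delta'$ are just $\Gamma(U_\sigma,\calF)_m$ together with the same $\eta_{\tau\sigma}$. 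So I will take $\beta^*$ to be simply forgetting the cones not in $\Delta'$, which makes the identification literally canonical.

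These restriction functors also compose strictly. For the embeddings $X_{ijk}\to X_{ij}\to X_i$ we have $(\beta^{ijk}_{ij})^*(\beta^{ij}_i)^*=(\beta^{ijk}_i)^*$, which is needed to compare the cocycle conditions.

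\textbf{Step 3: define the functor and check it is an equivalence.} Send $\calE\mapsto\{\Phi_{\Delta_{ii}}(\calE|_{X_i})\}_i$ with $\eta^{ij}:=\Phi_{\Delta_{ij}}(\theta^{ij})$. Functoriality of $\Phi$ turns the cocycle condition for the $\theta^{ij}$ into the equality required in Definition \ref{def:Sfam}. It also turns compatible morphisms of sheaves into morphisms of $\calS$-families. The three properties then follow from the corresponding properties of each $\Phi_\Delta$:
\begin{itemize}
\item \emph{Faithfulness} is immediate.
\item \emph{Fullness}: a morphism of $\calS$-families gives chartwise sheaf morphisms, and the commuting squares translate back to compatibility with the $\theta^{ij}$ because $\Phi_{\Delta_{ij}}$ is faithful.
\item \emph{Essential surjectivity}: given an $\calS$-family, choose sheaves $\calE_i$ with $\Phi(\calE_i)\cong\widehat E^{\Delta_{ii}}$. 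Lift $\eta^{ij}$ to $\theta^{ij}$ using fullness of $\Phi_{\Delta_{ij}}$; each $\theta^{ij}$ is an isomorphism since $\Phi$ reflects isomorphisms. The cocycle condition holds by faithfulness of $\Phi_{\Delta_{ij}\cap\Delta_{jk}}$. Gluing by Step 1 then produces $\calE$.
\end{itemize}

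\textbf{Main obstacle.} The delicate point is the naturality in Step 2, including its transport through the chosen isomorphisms $\Phi(\calE_i)\cong\widehat E^{\Delta_{ii}}$. Essential surjectivity forces us to conjugate the $\eta^{ij}$ by these isomorphisms restricted to $X_{ij}$. So we must know that restriction commutes with $\Phi$ on morphisms, not just on objects, to keep the cocycle identity intact. I would handle this by building $\Phi_\Delta$ explicitly via $E^\sigma_m=\Gamma(U_\sigma,\calE)_m$, so that restriction is strictly forgetting cones and all squares commute on the nose.
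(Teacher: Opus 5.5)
Your proposal is correct and follows the same route as the paper: restrict to the invariant affine charts $X_i=X(\Delta_{ii})$, apply Perling's equivalence (Theorem \ref{the:TorEqvDelFamAndShvs}) on each chart and on the overlaps $X_{ij}$, $X_{ijk}$, then glue using the cocycle condition. Your extra care fills in details that the paper's proof leaves implicit: you make restriction to a subfan literally the forgetting of cones, and in the essential-surjectivity step you conjugate the $\eta^{ij}$ by the chosen isomorphisms $\Phi(\calE_i)\cong\widehat{E}^{\Delta_{ii}}$.
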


\begin{proof} Let $\mathcal{E}$ be an equivariant quasicoherent sheaf over $X(\calS)$. Note that $\{X_i=X(\Delta_{ii}): i \in I\}$ is an invariant affine open cover of $X$. Furthermore, we have $X_i \cap X_j = X_{ij}$ and $X_i \cap X_j \cap X_k = X_{ijk}$ for all $i,j \in I$. For each $i \in I$, we consider the restriction $\mathcal{E}|_{X_i}$, which is an equivariant quasicoherent sheaf.  Then we obtain a family of equivariant quasicoherent sheaves $\mathcal{E}|_{X_i}$ over $X_i$ together with isomorphisms $\phi_{ij}:  \pb{(\beta^{ij}_{i})}\mathcal{E}|_{X_i} \longrightarrow \pb{(\beta^{ij}_{j})} \mathcal{E}|_{X_j}$ over $X_{ij}$ such that $\phi_{ii}= 1$  and 
 $\phi_{ik}= \phi_{jk} \circ \phi_{ij}$ over $X_{ijk}$. Now by Theorem \ref{the:TorEqvDelFamAndShvs}, each $\mathcal{E}|_{X_i}$ corresponds to a family of $\Delta_{ii}$-filtrations $E^{\Delta_{ii}}$ and for each $i,j \in I$, $\phi_{ij}$ correspond to a morphism $\eta^{ij}$ of $\Delta_{ij}$-families such that $\eta^{ii} = 1$ and $\eta^{ik} = \eta^{jk} \circ \eta^{ij}$ over $X_{ijk}$. Therefore, by Definition \ref{def:Sfam}, this corresponds to an $\calS$-family $\{ \widehat{E}^{\Delta_{ii}}\}_{i \in I}$.

 Conversely, let an $\calS$-family $\{ \widehat{E}^{\Delta_{ii}}\}_{i \in I}$ be given. Then for each $i \in I$, we have an equivariant quasicoherent sheaf $\mathcal{E}_i$ on the affine open $X_i$, by Theorem \ref{the:TorEqvDelFamAndShvs}. Moreover, the collection of equivariant quasicoherent sheaves $\mathcal{E}|_{X_i}$ over $X_i$ are glued over $X_{ij}$ using the isomorphism corresponding to the isomorphisms of $\Delta_{ij}$-families 

$$(\beta^{ij}_{i})^{*} (\widehat{E}^{\Delta_{ii}})  \xrightarrow[\cong]{\eta^{ij}} (\beta^{ij}_{j})^{*} (\widehat{E}^{\Delta_{jj}})$$
over $X_{ij}$, part of the data of the given $\calS$-family. Again by definition, these isomorphisms are compatible. Hence, we obtain an equivariant quasicoherent sheaf associated with the given $\calS$-family.
 \end{proof}

We now describe equivariant coherent sheaves on toric prevarieties combinatorially.

\begin{definition} \label{def:fin_Sfam}
We say that an $\calS$-family is \emph{finite} if all its $\Delta$-families are finite as in Definition \ref{def:finSigmaFamily}.
\end{definition}
\noindent
As an immediate consequence of Definition \ref{def:fin_Sfam}, Proposition \ref{pro:cohDeltaFamily}, and Theorem \ref{the:TorEqvDelFamAndShvs}, we have the following result.

\begin{proposition}
Let $\calS$ be a system of fans and $X(\calS)$ the associated toric prevariety. Then a quasicoherent equivariant sheaf on $X(\calS)$ is coherent if and only if the associated $\calS$-family is finite.
    
\end{proposition}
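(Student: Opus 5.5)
The plan is to reduce the statement to the separated case chart by chart, since coherence is a local property. Let $\calE$ be an equivariant quasicoherent sheaf on $X(\calS)$. By the equivalence established in the preceding theorem, $\calE$ corresponds to an $\calS$-family $\widehat{E}^{\calS}=\{\widehat{E}^{\Delta_{ii}}\}_{i\in I}$. In this correspondence each $\widehat{E}^{\Delta_{ii}}$ is precisely the $\Delta_{ii}$-family attached, via Theorem \ref{the:TorEqvDelFamAndShvs}, to the restriction $\calE|_{X_i}$ on the toric variety $X_i=X(\Delta_{ii})$. I would first record this compatibility explicitly. It is built into the construction in the proof of the previous theorem, where the family on $X_i$ is obtained by applying Perling's equivalence to $\calE|_{X_i}$.

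The key step is the standard fact that a quasicoherent sheaf on a scheme is coherent if and only if its restriction to each member of some open cover is coherent. Here the scheme is $X(\calS)$; it is noetherian, being of finite type over $k$, though possibly non-separated. Non-separatedness plays no role, because coherence is checked locally and the $X_i$ form an open cover. Hence $\calE$ is coherent if and only if each $\calE|_{X_i}$ is coherent on $X_i$.

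Then I would apply Proposition \ref{pro:cohDeltaFamily} to each $X_i$. It says $\calE|_{X_i}$ is coherent if and only if the $\Delta_{ii}$-family $\widehat{E}^{\Delta_{ii}}$ is finite. By Definition \ref{def:fin_Sfam}, all $\widehat{E}^{\Delta_{ii}}$ being finite is exactly finiteness of the $\calS$-family, and combining the equivalences gives the claim.

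The only point needing care is the identification in the first paragraph. The $\calS$-family is defined only up to the equivalence of categories, so I would note that finiteness of a $\Delta$-family is invariant under isomorphism of $\Delta$-families. Indeed, each condition in Definition \ref{def:finSigmaFamily} (finite dimensionality, vanishing along descending chains, finitely many non-generated degrees) is preserved by degreewise isomorphisms commuting with the maps $\chi^\sigma_{m,m'}$. Apart from this, the argument is a direct assembly of earlier results, so I expect no genuine obstacle.
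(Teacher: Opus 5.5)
Your proposal is correct and takes the same approach as the paper. The paper derives the statement as an immediate consequence of Definition \ref{def:fin_Sfam}, Perling's coherence criterion (Proposition \ref{pro:cohDeltaFamily}) applied on each chart $X_i=X(\Delta_{ii})$, and the equivalence of Theorem \ref{the:TorEqvDelFamAndShvs}; you simply spell out two points it leaves implicit, namely that coherence can be checked on an open cover and that finiteness is invariant under isomorphism of $\Delta$-families.
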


\subsection{Torsion free equivariant sheaves on toric prevarieties}

\begin{comment}

We describe a torsion free equivariant sheaf on a prevariety in terms of multifiltration developed in \textcolor{red}{Jose and Vivek}. 

\begin{definition}[Filtration] \label{def:filtration}
    Let $(P, \leq)$ be a partially ordered set (poset) and $V$ be a vector space (not necessarily finite dimensional). A decreasing (resp. increasing) filtration of $V$ with respect to $(P, \leq)$ is a family of subspaces $V_p, p \in P$ of $V$ such that for $p_1, p_2 \in P$ with relation $p_1 \leq p_2 $ the inclusion $V_{p_1} \geq V_{p_2}$ (resp. $V_{p_1} \leq V_{p_2}$) holds in $V$.
\end{definition}

\begin{example}
    The integers $\Z$ and the real numbers $\R$ are sets partially ordered with their natural ordering. In fact, these are ordered sets.
\end{example}

\begin{example}
    Let $\sigma$ be a cone in the lattice $N$. Then $\sigma$ induces a partial ordering $\leq_{\sigma}$ in the dual lattice $M$ as in Definition \textcolor{red}{def 1.5}. The necessary and sufficient condition for $\leq_{\sigma}$ to be an order is $\sigma$ is full dimensional in $N$.
\end{example}

\begin{definition} \label{def:multifiltration}
    Let $(P, \leq)$ and $V$ be as in Definition \ref{def:filtration}. Let $Q$ be a set. A multifiltration of $V$ with respect to $(P, \leq)$ indexed by $Q$ is a collection $\{V^{q}_{p}\}_{q \in Q; p \in P}$ of filtrations of $V$ with respect to $(P, \leq)$.
\end{definition}

\end{comment}

Following the footsteps of Perling, the notion of multifiltrations can be generalised to the following notion of $\calS$-multifiltrations, which we use to describe equivariant torsion free sheaves on toric prevarieties.

\begin{definition}\label{def:Smult} Let $I$ be a finite set and $\calS = \{\Delta_{ij} : i,j \in I \}$ be a system of fans and $E$ a finite-dimensional $k$-vector space. A collection $\{\{E^{\sigma}_m\}_{\sigma \in \Delta_{ii}, m \in M} \}_{i \in I}$ of family of $\Delta$-multifiltrations is called a \emph{family of $\calS$-multifiltrations}, if for each $i,j \in I$, there exists isomorphisms of families of $\Delta_{ij}$-multifiltrations 
\begin{equation}\label{torsion free equality 1}
   \{(\beta^{ij*}_{i} E)^{\sigma'}_m\}_{m \in M}  \xrightarrow[\cong]{\eta^{ij}} \{(\beta^{ij*}_{j} E)^{\sigma'}_m\}_{m \in M} 
\end{equation}
for all $\sigma' \in \Delta_{ij}$ and for each triple $i,j,k$ there is an equality 
\begin{equation}\label{torsion free equality}
    \pb{(\beta^{ijk}_{ik})} ( \eta^{ik}) = \pb{(\beta^{ijk}_{jk})} ( \eta^{jk}) \circ  \pb{(\beta^{ijk}_{ij})} ( \eta^{ij})
\end{equation}
of morphisms of family of $\Delta_{ij} \cap \Delta_{jk}$-multifiltrations over $X_{ijk}$ (see Remark \ref{rem:pullbackfamily} and Remark \ref{rem:Sfamilyproperties}).\\
A \emph{morphism of families of $\calS$-multifiltrations} is a homomorphism of vector spaces compatible with the multifiltrations.

\end{definition}

The following theorem is a generalisation of Theorem \ref{thm:Per torsion free}.
\begin{theorem} \label{the:torFreeCohSheaves}
    Let $X(\calS)$ be the toric prevariety associated with the system of fans $\calS$. The category of families of $\calS$-multifiltrations is equivalent to the category of equivariant torsion-free coherent sheaves of modules on the toric prevariety \(X(\calS)\).
\end{theorem}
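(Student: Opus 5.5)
The plan is to reduce to the quasicoherent case just proved, together with Theorem \ref{thm:Per torsion free} applied chart by chart. Torsion-freeness and coherence are local properties. Hence an equivariant quasicoherent sheaf $\calE$ on $X(\calS)$ is torsion free and coherent if and only if each restriction $\calE|_{X_i}$ is, for $i \in I$. Since $X(\calS)$ is integral and each $X_i$ contains the open orbit $T$, all these sheaves share the same generic stalk. So I would fix $E := \mathbf{E}^0$, the direct limit attached to the open orbit, and view every chart's data inside this one finite-dimensional space.

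In the forward direction, let $\calE$ be torsion free and coherent. By Theorem \ref{thm:Per torsion free}, each $\calE|_{X_i}$ gives a family of $\Delta_{ii}$-multifiltrations $\{E^{\sigma}_m\}_{\sigma\in\Delta_{ii},m\in M}$ of a space $\mathbf{E}^0_i$. The transition isomorphisms $\phi_{ij}$ from the proof of the previous theorem live over $X_{ij}\supseteq T$. Restricting them to $T$, i.e.\ to the cone $\{0\}\in\Delta_{ij}$, gives linear isomorphisms $\mathbf{E}^0_i\cong\mathbf{E}^0_j$. I would use these to identify every $\mathbf{E}^0_i$ with $E$ (for instance via a fixed index $i_0$). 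By Remark \ref{rem:pullbackfiltration}, the restrictions of $\calE|_{X_i}$ and $\calE|_{X_j}$ to $X_{ij}$ give $\Delta_{ij}$-multifiltrations $(\beta^{ij*}_iE)^{\sigma'}_m$ and $(\beta^{ij*}_jE)^{\sigma'}_m$. Then $\phi_{ij}$ becomes the isomorphism $\eta^{ij}$ of \eqref{torsion free equality 1}, and the cocycle condition for the $\phi_{ij}$ gives \eqref{torsion free equality}. For morphisms, I would use that a morphism of torsion-free sheaves is determined by its generic map on $\mathbf{E}^0$. By full faithfulness in Theorem \ref{thm:Per torsion free}, it is the same as a linear map compatible with all the multifiltrations and the $\eta^{ij}$.

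In the converse direction, start from a family of $\calS$-multifiltrations. Theorem \ref{thm:Per torsion free} gives torsion-free coherent equivariant sheaves $\calE_i$ on $X_i$. The isomorphisms $\eta^{ij}$ correspond, by the same equivalence applied to the fan $\Delta_{ij}$, to isomorphisms $\calE_i|_{X_{ij}}\cong\calE_j|_{X_{ij}}$, and \eqref{torsion free equality} is the cocycle condition on $X_{ijk}$. Gluing exactly as in the previous theorem produces an equivariant quasicoherent sheaf on $X(\calS)$. It is coherent and torsion free because it is so on each $X_i$. Checking that the two constructions are mutually quasi-inverse then reduces chartwise to Theorem \ref{thm:Per torsion free}.

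The main obstacle is bookkeeping. Definition \ref{def:Smult} refers to a single vector space $E$, while the $\eta^{ij}$ are allowed to be nontrivial automorphisms. I must check that morphisms (linear maps on the common space compatible with the multifiltrations) really match sheaf morphisms when the $\eta^{ij}$ differ from the identity. That is, morphisms must also commute with the $\eta^{ij}$, as in Definition \ref{def:Sfam}. I would handle this by restricting to the open orbit, where $\Delta_{ij}$-families are just vector spaces, and translating each $\eta^{ij}$ into an automorphism of $E$. The remaining verifications are then formal consequences of the equivalence in the previous theorem.
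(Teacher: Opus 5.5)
Your overall strategy is the same as the paper's. You apply Theorem~\ref{thm:Per torsion free} on each chart $X_i$, restrict to $X_{ij}$ to obtain the $\eta^{ij}$, use $X_{ijk}$ for the cocycle condition, and glue for the converse. The paper's proof does not treat morphisms at all, and that is exactly where your argument has a gap: your resolution of what you call the main obstacle is wrong.

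\textbf{The gap.} You take a morphism to be a single linear map $\phi\colon E\to F$ that is compatible with all multifiltrations and satisfies $\eta^{ij}_F\circ\phi=\phi\circ\eta^{ij}_E$. This does not reproduce sheaf morphisms once the $\eta^{ij}$ are nontrivial. Consider the affine line with double origin (Example~\ref{example_A1twoorigins}):
\begin{itemize}
\item take $E=F=k$, the same rank-one multifiltrations on both charts, and $\eta^{12}_E=\lambda$, $\eta^{12}_F=\lambda'$ with $\lambda\neq\lambda'$ in $k^*$;
\item the glued sheaves are isomorphic, via the chartwise scalars $(1,\lambda'/\lambda)$, which satisfy $\eta^{12}_F\cdot 1=(\lambda'/\lambda)\cdot\eta^{12}_E$;
\item in your category, a morphism is $\phi\in k$ with $\lambda'\phi=\phi\lambda$, so the only morphism is $0$.
\end{itemize}
Hence your functor from multifiltrations to sheaves is not full.

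The two constructions also fail to be quasi-inverse. Your forward construction identifies each $\mathbf{E}^0_i$ with $\mathbf{E}^0_{i_0}$ via $\phi_{ii_0}|_T$. By the cocycle condition on $X_{iji_0}\supseteq T$, it therefore always outputs $\eta^{ij}=\mathrm{id}$. So the round trip sends the object with $\lambda$ to the object with $1$, and these are not isomorphic in your category.

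\textbf{The missing observation.} The cone $\{0\}$ lies in every $\Delta_{ij}\cap\Delta_{jk}$. So the $\eta^{ij}$ are automorphisms of $E$ satisfying the cocycle identity for all triples, and hence they are coboundaries: $\eta^{ij}=g_jg_i^{-1}$ with $g_i:=\eta^{i_0i}$. Transport the chart-$i$ multifiltrations by $g_i^{-1}$. This gives an object with all $\eta^{ij}=\mathrm{id}$, namely $\Delta_{ii}$-multifiltrations of one space $E$ that coincide on the cones of $\Delta_{ij}$. This normalization is also what the paper tacitly uses in the proof of Theorem~\ref{thm:relexive}. For normalized objects, morphisms really are single linear maps compatible with all multifiltrations.

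\textbf{Alternative fix.} You may instead keep arbitrary $\eta^{ij}$ but define a morphism as a tuple $(\phi_i)_{i\in I}$ of linear maps $E\to F$, mirroring Definition~\ref{def:Sfam}. Each $\phi_i$ must be compatible with the chart-$i$ multifiltrations, and the tuple must satisfy $\eta^{ij}_F\phi_i=\phi_j\eta^{ij}_E$. The tuple $(g_i^{-1})_{i\in I}$ is then an isomorphism onto the normalized object. With either fix, your chartwise reduction to Theorem~\ref{thm:Per torsion free} goes through, together with the fact that morphisms into torsion-free sheaves are determined generically.
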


\begin{proof}
    Let $\mathcal{E}$ be an equivariant torsion free sheaf on $X(\calS)$. For each $i \in I$, the restriction $\mathcal{E}|_{X_i}$ to the affine toric variety $X_i:=X(\Delta_{ii})$ is a torsion free sheaf and hence corresponds to a family of $\Delta$-multifiltrations indexed by cones of $\Delta_{ii}$, using Theorem \ref{thm:Per torsion free}. We can consider the restriction to $X_i \cap X_j= X(\Delta_{ij})$. Then the isomorphism of torsion free equivariant sheaves $$(\beta^{ij*}_{i} \mathcal{E})  \xrightarrow[\cong]{\eta^{ij}} (\beta^{ij*}_{j} \mathcal{E})$$on the toric variety $X_{ij}$ gives rise to the isomorphism of $\Delta_{ij}$-multifiltrations $$\{(\beta^{ij*}_{i} E)^{\sigma'}_m\}_{m \in M}  \xrightarrow[\cong]{\eta^{ij}} \{(\beta^{ij*}_{j} E)^{\sigma'}_m\}_{m \in M}.$$ Considering triple intersections, we have the toric variety $X_{ijk}$ corresponding to the fan $\Delta_{ij} \cap \Delta_{jk}$, and hence by definition, and using Theorem \ref{thm:Per torsion free}, we have the equality \eqref{torsion free equality}.

    Conversely, let $\{\{E^{\sigma}_m\}_{\sigma \in \Delta_{ii}, m \in M} \}_{i \in I}$ be a family of $\calS$-multifiltrations. Then consider the open cover of $X(\calS)$ by the maximal $T$-invariant affine charts $X_i:=X(\Delta_{ii})$. Associated to the family of $\Delta$-multifiltrations $\{E^{\sigma}_m\}_{\sigma \in \Delta_{ii}, m \in M}$, we have an equivariant torsion free sheaf $\mathcal{E}_i$ on $X_i$. The glueing data is given by the isomorphism of the sheaves $$(\beta^{ij*}_{i} \mathcal{E}_i)  \xrightarrow[\cong]{\eta^{ij}} (\beta^{ij*}_{j} \mathcal{E}_j)$$ given by \eqref{torsion free equality 1} and the compatibility condition associated with \eqref{torsion free equality}. Hence, we get an equivariant torsion free sheaf $\mathcal{E}$ on $X(\calS)$.
\end{proof}

\begin{comment}

\begin{example}
    Let $\calS$ be a system of fans in $N$. Let $\calS'$ be another system of fans constructed from $\calS$ by adding two or higher dimensional cones in fans in $\calS$. Then $X(\calS)$ sits in $X_{\calS'}$ as an open subscheme outside a subscheme of codimensional two or higher. Then from Theorem \ref{the:torFreeCohSheaves} the prevarieties $X(\calS)$ and $X_{\calS'}$ have equivalent categories of equivariant torsion-free coherent  sheaves of modules. 
\end{example}
\end{comment}
\subsection{Reflexive equivariant sheaves on toric prevarieties}
If $\mathcal{E}$ is a reflexive sheaf on a normal integral scheme $X$, then $\Gamma(X, \mathcal{E}) = \Gamma(X \setminus Y, \mathcal{E})$ where $Y$ is a closed subset of $X$ of
codimension at least two. For a toric prevariety $X(\calS)$, consider the closed subset $Y=\bigsqcup_{\{[\sigma, i] \in \Omega(\calS)| \dim \sigma \geq 2\}}T \cdot x_{[\sigma, i]}$. Hence, for a reflexive sheaf on $X(\calS)$, we have $\Gamma(X, \mathcal{E}) = \Gamma( \bigcup_{\{[\rho, i] \in \Omega(\calS)| \dim \rho=1\}}X_{\rho}, \mathcal{E})$, where $X_{\rho}$ is the affine toric variety corresponding to the one-dimensional cone $\rho$. Thus, we have the following theorem.

\begin{theorem}\label{thm:relexive}
    Let $X(\calS)$ be the toric prevariety associated with the system of fans $\calS$. The category of equivariant reflexive sheaves on the toric prevariety \(X(\calS)\) is equivalent to the category of full increasing filtrations of vector spaces $(E, \{E^{[\rho,i]}(s)\}_{[\rho, i] \in \Lambda, \, s \in \Z})$, where $\Lambda=\{[\rho,i] \in \Omega(\calS)| \dim (\rho)=1\}$.
\end{theorem}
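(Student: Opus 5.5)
The plan is to reduce to Perling's Theorem~\ref{the:reflexivshv} on each affine chart $X_i = X(\Delta_{ii})$ and then glue, following the pattern of Theorem~\ref{the:torFreeCohSheaves}.

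First fix the ambient vector space. A reflexive sheaf is torsion free, so Theorem~\ref{the:torFreeCohSheaves} applies. Every chart $X_i$ contains the open orbit $T \cdot x_0 = X_{[\{0\},i]}$, so each $\mathcal{E}|_{X_i}$ has the same limit space $\mathbf{E}^0 = \Gamma(T,\mathcal{E})_0$, namely the degree-zero part of the $T$-equivariant module on the torus. The gluing isomorphisms $\eta^{ij}$, restricted to the open orbit, give automorphisms of $\mathbf{E}^0$. By the cocycle condition \eqref{torsion free equality}, together with the fact that all the $X_{ij}$ contain $T$, these are all the identity once we identify each $\mathbf{E}^0$ with $\Gamma(T,\mathcal{E})_0$. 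We therefore set $E := \mathbf{E}^0$.

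\textbf{Sheaf to filtrations.} For each ray class $[\rho,i]\in\Lambda$, the affine open $X_{[\rho,i]} \cong U_\rho$ sits inside $X_i$. Perling's construction gives a full increasing filtration $E^{[\rho,i]}(s) := \Gamma(X_{[\rho,i]},\mathcal{E})_m$ for any $m$ with $\langle m, v_\rho\rangle = s$, viewed inside $E$. This is well defined on equivalence classes: if $(\rho,i)\sim(\rho,j)$, then $\rho\in\Delta_{ij}$ and $X_{[\rho,i]}=X_{[\rho,j]}$ is the same open subset of $X(\calS)$. The two filtrations then agree because the transition map is the identity on $E$. Morphisms of sheaves give maps on $E$ compatible with all the filtrations.

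\textbf{Filtrations to sheaf.} Given $(E,\{E^{[\rho,i]}(s)\})$, apply Theorem~\ref{the:reflexivshv} on each $X_i$, using the rays of $\Delta_{ii}$, to get a reflexive sheaf $\mathcal{E}_i$ with $E^{\sigma}_m=\bigcap_{\rho\in\sigma(1)}E^{[\rho,i]}(\langle m,v_\rho\rangle)$. On $X_{ij}$ the two restrictions $\mathcal{E}_i|_{X_{ij}}$ and $\mathcal{E}_j|_{X_{ij}}$ correspond to the same data: for $\sigma\in\Delta_{ij}$ every ray $\rho\in\sigma(1)$ lies in $\Delta_{ij}$, so $[\rho,i]=[\rho,j]$ and the intersections coincide. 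Hence the identity of $E$ induces isomorphisms $\eta^{ij}$, and the cocycle condition holds trivially. This gives a family of $\calS$-multifiltrations, and Theorem~\ref{the:torFreeCohSheaves} produces an equivariant torsion-free coherent sheaf $\mathcal{E}$. It is reflexive because reflexivity is local and each $\mathcal{E}_i$ is reflexive.

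\textbf{Equivalence.} The two constructions are mutually inverse. Starting from a sheaf, its restrictions to the $X_i$ are recovered by Theorem~\ref{the:reflexivshv} from the ray filtrations, and the gluing is the identity on $E$. The main obstacle is the bookkeeping showing that the gluing maps are canonically the identity on $E$, so that the filtrations indexed by $\Lambda$ carry all the data and there are no extra choices or automorphisms. I would argue this via the chain $\Gamma(X_i,\mathcal{E})\hookrightarrow\Gamma(T,\mathcal{E})$: every $\eta^{ij}$ is the restriction of the identity of $\mathcal{E}|_{X_{ij}}$, so it is compatible with the inclusions into $\Gamma(T,\mathcal{E})$. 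The statement about morphisms follows in the same way: a morphism of sheaves is determined by its restriction to $T$, and compatibility with the ray filtrations is exactly Perling's condition on each chart.
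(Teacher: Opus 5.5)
Your argument is correct. The sheaf-to-filtrations direction is essentially the paper's: restrict the $\calS$-multifiltration of $\mathbf{E}^0=\Gamma(T,\mathcal{E})_0$ to the rays, and note that $[\rho,i]=[\rho,j]$ gives the same open set and hence the same subspaces. One small correction: the transition maps are the identity on $\mathbf{E}^0$ because they are restrictions of the identity of $\mathcal{E}|_{X_{ij}}$, as you say at the end; the cocycle condition alone would not force this.

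The converse is where you take a genuinely different route. The paper builds a locally free sheaf on $U=\bigcup_{[\rho,i]\in\Lambda}X_{[\rho,i]}$ from Klyachko's theorem on each smooth ray chart. It glues these along $T$, since distinct ray classes meet only in $T$, and then sets $\mathcal{E}=\alpha_*\mathcal{E}_U$. This relies on the Hartogs-type fact that pushing a locally free sheaf forward across a closed subset of codimension $\geq 2$ in a normal scheme gives a coherent reflexive sheaf. You instead apply Perling's reflexive classification on each maximal affine chart $X_i$ and glue along the $X_{ij}$. Your key check is that every ray of a cone in $\Delta_{ij}$ lies in $\Delta_{ij}$, so $[\rho,i]=[\rho,j]$ for the relevant rays and the formulas $E^\sigma_m=\bigcap_{\rho\in\sigma(1)}E^{[\rho,i]}(\langle m,v_\rho\rangle)$ agree on overlaps. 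The two constructions yield the same sheaf, since Perling's intersection formula is exactly the module-level form of the pushforward from the ray charts.

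The paper's route is shorter and explains conceptually why only rays carry information. However, it outsources coherence and reflexivity of $\alpha_*$ to an external result. Yours stays within results already established in the paper and makes explicit where non-separatedness enters: two classes of the same ray are identified across charts only when $\rho\in\Delta_{ij}$. It also treats full faithfulness, which the paper's proof does not address.
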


\begin{proof}
    Let $\mathcal{E}$ be a reflexive sheaf on the toric prevariety $X(\calS)$. Then $\mathcal{E}$ is torsion free and by Theorem \ref{the:torFreeCohSheaves}, corresponds to a family of $\calS$-multifiltrations $\{\{E^{\sigma}_m\}_{\sigma \in \Delta_{ii}, m \in M} \}_{i \in I}$ of a vector space $\dirlim{E}{0}$. Here $\dirlim{E}{0}$ can be identified with the part of $\Gamma (T, \mathcal{E})$ of degree $0$. Then we can consider the subcollection $\{\{E^{\rho}_m\}_{\rho \in \Delta_{ii}(1), m \in M} \}$. Observe that, for $[\rho,i]=[\rho, j] \in \Lambda$, the corresponding subspaces are the same, and hence, we denote it by $E^{[\rho, i]}_m$. %Further, for each $i \in I$, $\Gamma (X_i,\mathcal{E})= \Gamma ( \cup_{\rho \in \Delta_{ii}(1)}X_{\rho},\mathcal{E})= \cap_{\rho \in \Delta_{ii}(1)}\Gamma (X_{\rho},\mathcal{E}).$ Recall that the fan $\Delta_{ii}$ is irreducible, i.e., consists of faces of a cone, say $\sigma \in \Delta_{ii}$. Thus, considering the $m$-th graded component, and translating to the family of multifiltrations, we obtain $E^{\sigma}_m=\cap_{\rho \in \Delta_{ii}(1)}E^{\rho}_m$.
    As in the case of toric varieties, for the primitive ray generator $v_{\rho}$ of the ray $\rho$, we have the identification $E^{[\rho, i]}_m= E^{[\rho, i]}(\langle m, v_{\rho} \rangle)$. This gives us increasing full filtrations of the vector space $\mathbf{E}^0$ indexed by $\Lambda$.

    Conversely, let us start with a collection of full increasing filtrations of a vector space $E$, given by $(E, \{E^{[\rho,i]}(s)\}_{[\rho, i] \in \Lambda, \, s \in \Z})$. Consider the open subset of $X(\calS)$ defined by $U=\bigcup_{\{[\rho, i] \in \Lambda\}}X_{\rho}$. 
For each $[\rho, i] \in \Lambda$, the collection of  filtrations $(E,\{E^{[\rho,i]}(s)\}_{  s \in \Z } )$ gives us an equivariant locally free sheaf $\mathcal{E}_\rho$ on the affine toric variety $X_\rho$ by Theorem \ref{thm:klyachko}. Since $X_{\rho} \cap X_{\rho'}=T$ for distinct rays $\rho$, the collection of equivariant locally free sheaves $\{\mathcal{E}_\rho\}_{[\rho, i] \in \Lambda}$ glue together to give an equivariant locally free sheaf $\mathcal{E}_U$ on $U$. Let us define $\mathcal{E}:=\alpha_*(\mathcal{E}|_U)$ to be the pushforward of $\mathcal{E}|_U$ via the open embedding $\alpha:U \hookrightarrow X(\calS)$.  Since the complement of $U$ has codimension at least $2$ in $X(\calS)$, we have that $\mathcal{E}$ is an equivariant reflexive sheaf on $X(\calS)$.
\end{proof}

\section{Equivariant locally free sheaves on toric prevarieties}
In this section, we focus on equivariant locally free sheaves, presenting two equivalent descriptions and analysing their equivariant splitting properties.
\subsection{Equivariant locally free sheaves on toric prevarieties as a collection of compatible filtrations} 
In this subsection, we provide a combinatorial description of locally free equivariant sheaves on toric prevarieties, generalising Theorem \ref{thm:klyachko}.
\begin{theorem}\label{th:klyachko-prevar}
    Let $X(\calS)$ be the toric prevariety associated with the system of fans $\calS$. The category of equivariant locally free sheaves on the toric prevariety \(X(\calS)\) is equivalent to the category of full increasing filtrations of vector spaces $(E, \{E^{[\rho,i]}(s)\}_{[\rho, i] \in \Lambda, \, s \in \Z})$ indexed by $\Lambda=\{[\rho,i] \in \Omega(\calS)| \dim(\rho)=1\}$ satisfying the following compatibility condition:\\
    for each $\sigma \in \Delta_{ii}$, there exists a $T$-eigenspace decomposition $E= \bigoplus_{m \in M/\sigma^\perp \cap M} E^{\sigma}_m$ such that $$E^{[\rho,i]}(s)=\sum_{\substack{m \in M/\sigma^\perp \cap M, \\ \langle m, v_{\rho} \rangle \leq s}} E^{\sigma}_m$$ for each $\rho \in \sigma(1).$
\end{theorem}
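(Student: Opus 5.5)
The plan is to reduce to the reflexive case, Theorem \ref{thm:relexive}, and then use Klyachko's local criterion, Theorem \ref{thm:klyachko}, chart by chart. Locally freeness is a local property and the $X_i = X(\Delta_{ii})$ form an invariant open cover of $X(\calS)$. So an equivariant reflexive sheaf $\mathcal{E}$ on $X(\calS)$ is locally free if and only if each restriction $\mathcal{E}|_{X_i}$ is locally free. Since every locally free sheaf is reflexive, the category of equivariant locally free sheaves is a full subcategory of the category of equivariant reflexive sheaves. Theorem \ref{thm:relexive} identifies the latter with full increasing filtrations $(E,\{E^{[\rho,i]}(s)\})$ indexed by $\Lambda$, with compatible linear maps as morphisms. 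It therefore suffices to identify which filtration data correspond to locally free sheaves.

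First I fix $i\in I$. The restriction $\mathcal{E}|_{X_i}$ is an equivariant reflexive sheaf on the toric variety $X(\Delta_{ii})$. By the construction in the proof of Theorem \ref{thm:relexive}, its data under Theorem \ref{the:reflexivshv} are the same vector space $E=\mathbf{E}^0$ (the degree-zero part of $\Gamma(T,\mathcal{E})$, independent of $i$) together with the filtrations $E^{[\rho,i]}(s)$ for $\rho\in\Delta_{ii}(1)$. Here one must check that passing from $(\rho,i)$ to its class $[\rho,i]$ loses nothing. Indeed, if $[\rho,i]=[\rho,j]$, then $X_\rho\subset X_{ij}$, and the gluing isomorphism $\eta^{ij}$ of Theorem \ref{the:torFreeCohSheaves} restricts to the identity on $\mathbf{E}^0$ after the identifications. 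So the filtrations agree, as already observed in the proof of Theorem \ref{thm:relexive}.

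Next I apply Theorem \ref{thm:klyachko} to each $X(\Delta_{ii})$. The restriction $\mathcal{E}|_{X_i}$ is locally free if and only if, for every $\sigma\in\Delta_{ii}$, there is a decomposition $E=\bigoplus_{m\in M/\sigma^\perp\cap M}E^\sigma_m$ with $E^{[\rho,i]}(s)=\sum_{\langle m,v_\rho\rangle\le s}E^\sigma_m$ for all $\rho\in\sigma(1)$. This is exactly the compatibility condition in the statement. Combining over $i$ shows that $\mathcal{E}$ is locally free if and only if its filtration data satisfy the condition. Since $\Delta_{ii}$ is irreducible, it would also suffice to check the maximal cone of each $\Delta_{ii}$, because a decomposition for $\sigma$ induces one for each face by coarsening the grading. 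This remark is not needed for the proof.

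Conversely, given compatible filtrations, Theorem \ref{thm:relexive} produces a reflexive $\mathcal{E}=\alpha_*(\mathcal{E}_U)$. I then need $\mathcal{E}|_{X_i}$ to be the Klyachko bundle built from the filtrations indexed by $\Delta_{ii}(1)$. Both sheaves are reflexive on $X_i$ and agree on $U\cap X_i$, whose complement in $X_i$ has codimension at least $2$, so they coincide. Hence $\mathcal{E}$ is locally free. Fullness and faithfulness are inherited from Theorem \ref{thm:relexive}, since both categories are full subcategories on the two sides.

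The main obstacle is this last identification: checking that pushing forward from $U$ and restricting to $X_i$ commute, and that the $\rho$-filtrations seen from different charts $X_i, X_j$ sharing a ray class define the same sheaf. I would handle it by the codimension-two extension property of reflexive sheaves on the normal scheme $X_i$, together with the fact that $X_i$ is an open subset of $X(\calS)$ containing $X_\rho$ for every $\rho\in\Delta_{ii}(1)$.
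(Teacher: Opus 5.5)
Your proposal is correct and follows essentially the same route as the paper: reduce to the reflexive classification of Theorem~\ref{thm:relexive}, then apply Klyachko's criterion (Theorem~\ref{thm:klyachko}) on each affine chart $X_i = X(\Delta_{ii})$. Your extra steps make explicit what the paper's short proof leaves implicit: identifying $\mathcal{E}|_{X_i}$ with the Klyachko bundle via the codimension-two extension property of reflexive sheaves, and obtaining full faithfulness because both categories are full subcategories.
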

\begin{proof}
     Let $\mathcal{E}$ be an equivariant locally free sheaf on the toric prevariety $X(\calS)$. Then, since $\mathcal{E}$ is reflexive, we have associated a family of full decreasing filtrations of vector space $E$ indexed by $\{[\rho,i] \in \Omega(\calS)| \text{dim }(\rho)=1\}$. Since over any affine toric variety, an equivariant locally free sheaf is free, for each $i \in I$, from the restricted sheaf $\mathcal{E}|_{X_i}$, we have the $T$-eigenspace decomposition $E= \bigoplus_{m \in M /\sigma^\perp \cap M} E^{\sigma}_m$ such that $$E^{[\rho,i]}(s)=\sum_{\substack{m \in M/\sigma^\perp \cap M, \\ \langle m, v_{\rho} \rangle \leq s}} E^{\sigma}_m$$ for each $\rho \in \sigma(1)$. 

     Conversely, let us begin with a collection of full increasing filtrations of a vector space $E$. Then we have a reflexive sheaf $\mathcal{E}$ on $X(\calS)$ by Theorem \ref{thm:relexive}. Considering the open cover $\{X_i : i \in I\}$ of $X(\calS)$ the restricted sheaf $\mathcal{E}|_{X_i}, i \in I$ is locally free by Theorem \ref{thm:klyachko}. 
\end{proof}

\begin{remark}\label{rem:dist-lattice}
    Let $X(\calS)$ be a smooth toric prevariety. This is equivalent to saying that all cones $\sigma \in \Delta_{ii}$ are smooth. Then the compatibility condition can be restated as follows:  Let $(E, \{E^{[\rho,i]}(s)\}_{[\rho, i] \in \Lambda, \, s \in \Z})$ be a collection of full increasing filtrations of the vector space $E$. Then $(E, \{E^{[\rho,i]}(s)\}_{[\rho, i] \in \Lambda, \, s \in \Z})$ satisfies the compatibility condition if and only if, for each $\sigma \in \Delta_{ii}$, there exists a basis $B_{\sigma}$ of $E$ such that $B_{\sigma} \cap E^{[\rho,i]}(s)$ is a basis for $E^{[\rho,i]}(s)$ for all $\rho \in \sigma(1), \, s \in \Z$. In other words, $(E, \{E^{[\rho,i]}(s)\}_{[\rho, i] \in \Lambda, \, s \in \Z})$ forms a distributive lattice. The proof is similar to the case of toric varieties (see \cite[Remark 2.2.2]{Klyachko_bundles_torvar}). 
\end{remark}

The tangent bundle on a smooth toric prevariety has a combinatorial description analogous to the tangent bundle on a smooth toric variety.
\begin{proposition}\label{prop:tangent}     
Let $X(\calS)$ be a smooth toric prevariety. Then its tangent bundle $\mathscr{T}_X$ is given by the following filtrations of the vector space $N\otimes k$: 	\[ \mathscr{T}^{[\rho,i]}(s) = \left\{ \begin{array}{cc}
	%{r@{\quad \quad}l}
	0 & s \leq -2 \\ 
	\mathrm{Span}(v_{\rho}) & s=-1 \\
	N \otimes_{\Z} k & s \geq 0,
	\end{array} \right. \] 
where $[\rho,i] \in \Lambda$ and $v_{\rho}$ is the primitive ray generator of $\rho$.
\end{proposition}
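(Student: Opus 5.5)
The plan is to reduce to the separated case, chart by chart, and then check that the local descriptions glue.

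By Theorem \ref{th:klyachko-prevar} and Theorem \ref{thm:relexive}, the tangent sheaf $\mathscr{T}_X$ is a reflexive (indeed locally free, since $X(\calS)$ is smooth) equivariant sheaf. It is therefore determined by:
\begin{itemize}
\item the vector space $\mathbf{E}^0$, the degree-zero part of $\Gamma(T,\mathscr{T}_X)$;
\item the filtrations $E^{[\rho,i]}(s)$, which are read off on the affine charts $X_\rho \subseteq X_i$ for $[\rho,i]\in\Lambda$.
\end{itemize}
On the open orbit $T$, the tangent sheaf is $\mathcal{O}_T\otimes_{\Z} N$: the invariant vector fields are the derivations $\partial_v$ with $\partial_v(\chi^m)=\langle m,v\rangle\chi^m$ for $v\in N\otimes k$. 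So $\mathbf{E}^0 = N\otimes_\Z k$ canonically, and this identification does not depend on the chart $X_i$. This independence is the point of the whole argument: the transition isomorphisms $\eta^{ij}$ restrict to the identity on $T$, because the charts are glued along the identity of the torus.

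For each $[\rho,i]\in\Lambda$, the subspace $E^{[\rho,i]}(s)$ depends only on $\mathscr{T}_X|_{X_\rho}$, where $X_\rho\subseteq X_i$ is the affine toric variety of the ray $\rho$. This holds because a restriction of a tangent sheaf to an open set is the tangent sheaf of that open set. The gluing relation $[\rho,i]=[\rho,j]$ identifies the same open $X_\rho$ in $X_i$ and in $X_j$, and the two filtrations there agree as subspaces of $N\otimes k$. Hence the filtration attached to each element of $\Lambda$ is well defined.

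It remains to compute the filtration on $X_\rho\cong \A^1\times (k^*)^{n-1}$. This is exactly the toric-variety case (Klyachko's computation), which I would redo briefly:
\begin{itemize}
\item Choose a basis $m_1,\dots,m_n$ of $M$ with $\langle m_1,v_\rho\rangle=1$ and $\langle m_j,v_\rho\rangle=0$ for $j\ge 2$, and put $x_j=\chi^{m_j}$.
\item The module of sections $\Gamma(X_\rho,\mathscr{T})$ is free over $k[x_1,x_2^{\pm1},\dots,x_n^{\pm1}]$ with basis $\partial/\partial x_1 = x_1^{-1}\partial_{v_\rho}$ and $\partial_{u_j}$ for $j\ge 2$, where $u_j$ completes $v_\rho$ to a basis of $N$ dual to the $m_j$.
\item The generator $x_1^{-1}\partial_{v_\rho}$ has weight $-m_1$, with $\langle -m_1,v_\rho\rangle=-1$. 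The generators $\partial_{u_j}$ have weight $0$.
\end{itemize}
By the eigenspace description of Theorem \ref{thm:klyachko} (with the paper's increasing convention), this gives $E(s)=0$ for $s\le -2$, $E(-1)=\mathrm{Span}(v_\rho)$, and $E(s)=N\otimes k$ for $s\ge 0$.

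The compatibility condition of Theorem \ref{th:klyachko-prevar} holds automatically, since $\mathscr{T}_X$ is locally free. I expect the main obstacle to be bookkeeping rather than mathematics: fixing the sign and weight convention so that the increasing-filtration indexing matches $-1$ rather than $+1$, and justifying cleanly that all the charts give the same identification of $\mathbf{E}^0$ with $N\otimes k$.
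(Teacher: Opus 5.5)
Your proposal is correct and follows essentially the same route as the paper: restrict to affine charts and use Klyachko's computation of the tangent bundle of a toric variety. You go down to the ray charts $X_\rho$ and redo that weight computation yourself, where the paper simply cites it for the charts $X_i$. You also state explicitly that the identification $\mathbf{E}^0\cong N\otimes k$ does not depend on the chart because the gluing is the identity on $T$, a point the paper leaves implicit.
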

\begin{proof}
Consider the open cover $\{X_i : i \in I\}$ by affine toric varieties of $X(\calS)$. For each $i \in I$, the restricted tangent bundle $\mathscr{T}_X|_{X_i}$ coincides with the tangent bundle of the affine toric variety $X_i$. Then the description follows from \cite[Section 2.3, Example 5]{Klyachko_bundles_torvar} (see also \cite[Corollary 2.2.17]{DDK}).
\end{proof}

\subsubsection{Equivariant splitting of locally free equivariant sheaves on toric prevarieties} 
We say that an equivariant locally free sheaf splits equivariantly if it is equivariantly isomorphic to a direct sum of line bundles. Then, Remark \ref{rem:dist-lattice} can be used to prove the following criterion for equivariant splitting.
\begin{proposition} \label{prop:splitting}
    Let $X(\calS)$ be a smooth toric prevariety. The equivariant locally free sheaf associated to $(E, \{E^{[\rho,i]}(s)\}_{[\rho, i] \in \Lambda, \, s \in \Z})$ splits equivariantly if and only if there exists a basis $B$ of $E$ such that $B\cap E^{[\rho,i]}(s)$ is a basis for $E^{[\rho,i]}(s)$ for all $[\rho, i] \in \Lambda, \, s \in \Z$.
\end{proposition}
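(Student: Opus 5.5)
The plan is to translate both directions through Theorem \ref{th:klyachko-prevar}, using the fact that equivariant line bundles correspond to one-dimensional filtered spaces and that direct sums correspond to direct sums of filtered spaces. First I will record the rank one case. By Theorem \ref{th:klyachko-prevar}, an equivariant line bundle $\mathcal{L}$ on $X(\calS)$ is the same as a line $L$ together with integers $a_{[\rho,i]}$, $[\rho,i]\in\Lambda$, such that $L^{[\rho,i]}(s)=0$ for $s<a_{[\rho,i]}$ and $L^{[\rho,i]}(s)=L$ for $s\ge a_{[\rho,i]}$. Next I will check that the equivalence of Theorem \ref{th:klyachko-prevar} is additive: the direct sum $\mathcal{E}_1\oplus\mathcal{E}_2$ corresponds to $E_1\oplus E_2$ with filtrations $E_1^{[\rho,i]}(s)\oplus E_2^{[\rho,i]}(s)$. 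This is immediate from the construction in Theorem \ref{thm:relexive}, since restriction to each $X_\rho$, the torsion free data of Theorem \ref{thm:Per torsion free}, and the graded pieces $\Gamma(X_\rho,\mathcal{E})_m$ all commute with finite direct sums.

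For the forward direction, suppose $\mathcal{E}\cong\bigoplus_{l=1}^r\mathcal{L}_l$ equivariantly. By additivity, $(E,\{E^{[\rho,i]}(s)\})$ is isomorphic as a filtered space to $\bigoplus_l L_l$ with the rank one filtrations above. Choose a nonzero $b_l\in L_l$ and transport $B=\{b_1,\dots,b_r\}$ back along the isomorphism. Then $E^{[\rho,i]}(s)=\bigoplus_{a_{l,[\rho,i]}\le s}L_l$, which is spanned by $B\cap E^{[\rho,i]}(s)$, as required.

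For the converse, let $B=\{b_1,\dots,b_r\}$ be a basis adapted to every filtration. For each $l$ and each $[\rho,i]\in\Lambda$, set $a_{l,[\rho,i]}:=\min\{s : b_l\in E^{[\rho,i]}(s)\}$. This minimum exists because the filtrations are full, so they exhaust $E$ and vanish for $s\ll0$. Let $L_l=k b_l$ carry the rank one filtration with jumps $a_{l,[\rho,i]}$. Adaptedness of $B$ gives $E^{[\rho,i]}(s)=\mathrm{span}\{b_l : a_{l,[\rho,i]}\le s\}=\bigoplus_l L_l^{[\rho,i]}(s)$, so the identity map $E\to\bigoplus_l L_l$ is an isomorphism of filtered spaces. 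The main point to verify is that each $(L_l,\{L_l^{[\rho,i]}\})$ satisfies the compatibility condition of Theorem \ref{th:klyachko-prevar}, so that it really is an equivariant line bundle. Here I use smoothness: each $\sigma\in\Delta_{ii}$ is smooth, so its primitive ray generators $v_\rho$, $\rho\in\sigma(1)$, extend to a basis of $N$. Hence there is $m_l\in M$ with $\langle m_l,v_\rho\rangle=a_{l,[\rho,i]}$ for all $\rho\in\sigma(1)$, and the decomposition $L_l=(L_l)_{m_l}$ works. (Alternatively, Remark \ref{rem:dist-lattice} applies directly, since a one-dimensional space trivially has an adapted basis.)

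Full faithfulness of the equivalence then turns the filtered isomorphism into an equivariant isomorphism $\mathcal{E}\cong\bigoplus_l\mathcal{L}_l$. The only genuinely nontrivial points are the additivity of the functor and the rank one compatibility check, and I expect the latter to be where smoothness is essential.
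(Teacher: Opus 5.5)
Your proof is correct and follows the route the paper intends. The paper gives no written proof beyond saying that Remark \ref{rem:dist-lattice} can be used, and your argument is the Klyachko-style reasoning that remark points to: equivariant line bundles correspond to one-dimensional filtered spaces (compatibility via smoothness or Remark \ref{rem:dist-lattice}), the correspondence is additive, and an adapted basis splits $E$ into filtered lines. You supply the details the paper omits, including exactly where smoothness enters.
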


The following result about distributive lattices will be useful in proving the structure results for toric prevarieties.
\begin{theorem}\cite[Theorem 6.1.2]{Klyachko_bundles_torvar}\label{thm:subfamily}
    A family of subspaces $\{E_a\}_{a \in A}$, of an $m$-dimensional vector space $E$ generates a distributive lattice if and only if each $(m+1)$-element subfamily generates a distributive lattice.
\end{theorem}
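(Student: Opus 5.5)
The ``only if'' direction is immediate: the lattice generated by a subfamily is a sublattice of the lattice generated by the whole family, and a sublattice of a distributive lattice is distributive. For the converse I will use the reformulation already used in Remark \ref{rem:dist-lattice}. A finite family of subspaces of $E$ generates a distributive lattice if and only if there is a basis $B$ of $E$ with $B\cap E_a$ a basis of $E_a$ for every $a$. I will call such a $B$ an adapted basis. Since the lattice generated has length at most $m$, only finitely many distinct $E_a$ occur up to repetition, so I may assume $A$ is finite. I then argue by induction on $m=\dim E$. The cases $m\le 1$ are trivial, since every family of subspaces of a line is a chain.

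For the inductive step, discard the $E_a$ equal to $0$ or $E$. If none remain we are done. Otherwise fix some $E_{a_0}$ with $0<d=\dim E_{a_0}<m$ and consider two induced families. The first is $\{E_b\cap E_{a_0}\}_b$ inside $E_{a_0}$. The second is $\{(E_b+E_{a_0})/E_{a_0}\}_b$ inside $E/E_{a_0}$, which has dimension $m-d$.

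Take any subfamily of the first family with $d+1$ members. It lies in the lattice generated by the corresponding $d+1$ of the $E_b$ together with $E_{a_0}$. That is at most $d+2\le m+1$ original subspaces, so by hypothesis this lattice is distributive, and hence so is the sublattice. The same count, with $m-d+1$ members, works for the quotient family. By the induction hypothesis both induced families generate distributive lattices, so they have adapted bases $B'$ of $E_{a_0}$ and $\bar B''$ of $E/E_{a_0}$. The target is $B=B'\cup B''$, where $B''$ lifts $\bar B''$. For $B$ to be adapted, the lift of each $\bar v\in \bar B''\cap (E_b+E_{a_0})/E_{a_0}$ must lie in $E_b$, simultaneously for every such $b$.

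The main obstacle is this simultaneous lifting. Fix $\bar v$ and let $J(\bar v)=\{b : \bar v\in (E_b+E_{a_0})/E_{a_0}\}$. I need a lift of $\bar v$ in $\bigcap_{b\in J(\bar v)}E_b$. Each $E_b$ with $b\in J(\bar v)$ contains a lift, and two lifts differ by an element of $E_{a_0}$. So the task is to show that the affine subspaces $(v+E_{a_0})\cap E_b$ have a common point. Pairwise, this is exactly the distributive identity
\[
E_{a_0}\cap(E_b+E_c)=E_{a_0}\cap E_b+E_{a_0}\cap E_c ,
\]
which needs only three subspaces. The intersection of all of them is nonempty by a Helly-type argument for affine subspaces of the $d$-dimensional space $v+E_{a_0}$. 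If every subfamily of at most $d+1$ of these affine subspaces has a common point, then all of them do. Each such check involves at most $d+1$ of the $E_b$ together with $E_{a_0}$, hence at most $m+1$ original subspaces, so it is covered by the hypothesis.

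Having found such lifts, I will verify that the spans match. The elements of $B\cap E_b$ span $E_b\cap E_{a_0}$ through $B'$ and span $E_b$ modulo $E_{a_0}$ through $B''$, so they span $E_b$. Hence $B$ is adapted. I expect the careful execution of the Helly step, namely making the affine intersection pattern correspond exactly to distributivity of small subfamilies, to be where the real work lies.
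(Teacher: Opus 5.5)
The paper gives no proof of this statement. It quotes it from Klyachko and uses it as a black box in Corollary~\ref{cor:ranksplit}, so your induction on $\dim E$ is an independent route, and its core is correct.

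\textbf{What is right.} The counts are correct: $d+2\le m+1$ for $\{E_b\cap E_{a_0}\}$, and $m-d+2\le m+1$ for the quotient family, whose generated lattice is identified with a sublattice of $[E_{a_0},E]$ by $U\mapsto U/E_{a_0}$. The final span check is also correct.

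\textbf{The lifting step is easier than you expect.} For $J'\subseteq J(\bar v)$ with $|J'|\le d+1$, the condition $(v_0+E_{a_0})\cap\bigcap_{b\in J'}E_b\neq\emptyset$ says exactly $v_0\in E_{a_0}+\bigcap_{b\in J'}E_b$. Since $v_0\in\bigcap_{b\in J'}(E_{a_0}+E_b)$, this is the join-over-meet law
\[
E_{a_0}+\bigcap_{b\in J'}E_b=\bigcap_{b\in J'}(E_{a_0}+E_b).
\]
It holds in the lattice generated by $E_{a_0}$ and $\{E_b\}_{b\in J'}$, which has at most $d+2\le m+1$ members and is therefore distributive by hypothesis. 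The meet-over-join identity you wrote is equivalent to this for a triple only through a dimension count, so state the law you actually use.

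\textbf{Helly over an arbitrary field.} Your field is not ordered, so you cannot appeal to classical Helly; but Helly for affine subspaces holds over any field with Helly number $d+1$. Take a minimal subfamily $A_1,\dots,A_s$ with empty intersection; then $s\ge 2$ and each $A_i$ is proper. Moreover
\[
A_1\supsetneq A_1\cap A_2\supsetneq\cdots\supsetneq A_1\cap\cdots\cap A_{s-1}\neq\emptyset,
\]
because an equality $A_1\cap\cdots\cap A_k=A_1\cap\cdots\cap A_{k+1}$ would make the intersection of all $A_i$ with $i\neq k+1$ empty. Hence $s-2\le d-1$.

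\textbf{What your route buys over the citation.} Phrase the induction as ``if every subfamily with at most $\dim E+1$ members is distributive, then an adapted basis exists.'' Then you only need the easy implication ``adapted basis $\Rightarrow$ distributive'' (coordinate subspaces are closed under $\cap$ and $+$), and you reprove the harder converse as a by-product. This matters because the general adapted-basis criterion is not established in the paper: Remark~\ref{rem:dist-lattice} only concerns the filtrations attached to a smooth cone.

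\textbf{Two things must be fixed.}

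First, your justification for reducing to finite $A$ is false. A subspace lattice of length at most $m$ can be infinite: all lines of $k^2$ with $k$ infinite give one. So finite length does not bound the number of distinct $E_a$. The reduction itself is valid for a different reason. Distributivity is a condition on triples of elements of the generated lattice, and each such element is a lattice polynomial in finitely many $E_a$. Hence the whole family is distributive once every finite subfamily is, and finite subfamilies inherit the hypothesis.

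Second, the hypothesis must be read as ``every subfamily with at most $m+1$ members,'' as you tacitly do when you apply it to $d+2$ subspaces. Read literally as ``exactly $m+1$,'' the statement fails for families with fewer than $m+1$ members. For example, three coplanar lines in $k^3$ satisfy the literal hypothesis vacuously but do not generate a distributive lattice.
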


Depending on the structure of a toric prevariety, we can say when a vector bundle of a certain rank will always split equivariantly. Recall the definition of $\Lambda$ from Theorem \ref{th:klyachko-prevar}. The proof of the following corollary is along the lines of \cite[Corollary 6.1.4]{Klyachko_bundles_torvar}.
\begin{corollary}\label{cor:ranksplit}
    Let $X(\calS)$ be a smooth toric prevariety. The following are equivalent:
    \begin{enumerate}
        \item Any rank $m$ locally free equivariant sheaf on $X(\calS)$ splits equivariantly into direct sum of line bundles.

        \item For any subset $A$ of $\Lambda$ of cardinality $m+1$, there is a cone $\sigma_A \in \Delta_{jj}$ for some $j \in I$ such that $\sigma_A$ is generated by $\{\rho|[\rho, i] \in A\}$. 
    \end{enumerate}
\end{corollary}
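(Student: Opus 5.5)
We follow Klyachko's argument for \cite[Corollary 6.1.4]{Klyachko_bundles_torvar}, using Proposition \ref{prop:splitting}, Remark \ref{rem:dist-lattice} and Theorem \ref{thm:subfamily}. The key reformulation is this. By Proposition \ref{prop:splitting}, a rank $m$ sheaf given by $(E,\{E^{[\rho,i]}(s)\})$ splits equivariantly exactly when the whole family of subspaces $\{E^{[\rho,i]}(s)\}_{[\rho,i]\in\Lambda,\,s\in\Z}$ admits a common adapted basis. Since each single filtration is a chain, this is the same as saying the family generates a distributive lattice. Theorem \ref{thm:subfamily} then reduces distributivity to checking every subfamily of $m+1$ subspaces. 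Such a subfamily involves at most $m+1$ distinct labels $[\rho,i]$, because several subspaces from one filtration form a chain and add nothing new.

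For (2) $\Rightarrow$ (1), let $\mathcal{E}$ have rank $m$ and take any $m+1$ subspaces from the filtrations. Let $A\subseteq\Lambda$ be the set of labels that occur, and enlarge it arbitrarily to $m+1$ elements if $|\Lambda|\ge m+1$; if $|\Lambda|\le m$, apply (2) to $\Lambda$ itself, reading (2) as covering subsets of size at most $m+1$. By (2) there is a cone $\sigma_A\in\Delta_{jj}$ whose rays are the rays of $A$. The compatibility condition of Theorem \ref{th:klyachko-prevar}, in the form of Remark \ref{rem:dist-lattice}, gives a basis $B_{\sigma_A}$ adapted to all filtrations $E^{[\rho,j]}(\cdot)$ with $\rho\in\sigma_A(1)$. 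These are the given filtrations, once we check that the labels match, i.e.\ $[\rho,j]$ equals the given class $[\rho,i]\in A$. This identification is delicate, and I would make (2) precise as requiring $\{[\rho,j]:\rho\in\sigma_A(1)\}=A$. Hence every $(m+1)$-subfamily is distributive. Theorem \ref{thm:subfamily} makes the whole family distributive, and Proposition \ref{prop:splitting} gives the splitting.

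For (1) $\Rightarrow$ (2), argue by contraposition. Suppose $A=\{[\rho_0,i_0],\dots,[\rho_m,i_m]\}$ is not the set of rays of any single cone of any $\Delta_{jj}$. Take $E=k^m$, let $L_0,\dots,L_m$ be $m+1$ lines in general position, and set
\[
E^{[\rho_t,i_t]}(s)=\begin{cases}0 & s<0,\\ L_t & s=0,\\ E & s\ge 1,\end{cases}
\]
with the trivial filtration ($0$ for $s<0$, $E$ for $s\ge0$) for labels outside $A$. The lines $L_0,\dots,L_m$ have no common adapted basis, so by Proposition \ref{prop:splitting} this bundle does not split. The remaining task is to show these filtrations satisfy the compatibility of Theorem \ref{th:klyachko-prevar}, so that they really define a locally free sheaf. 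Since $X(\calS)$ is smooth, rays of a cone are linearly independent and every cone is simplicial. For any $\sigma\in\Delta_{ii}$, the classes of $A$ lying on $\sigma$ form a proper subset of $A$, hence have at most $m$ elements. Any at most $m$ lines in general position in $k^m$ are part of a basis, which we complete. We then need to assign characters $m\in M/\sigma^\perp\cap M$ so that $\langle m,v_\rho\rangle$ takes the value $0$ on the relevant line and $1$ on the complement for non-trivial rays, and $0$ for trivial rays. Smoothness makes $\{v_\rho\}_{\rho\in\sigma(1)}$ part of a lattice basis, so such characters exist.

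The main obstacle is this last verification together with the label bookkeeping. One must ensure that a cone $\sigma\in\Delta_{ii}$ only sees labels $[\rho,i]$, so that "the elements of $A$ lying on $\sigma$" is meaningful and is a proper subset of $A$ whenever $A$ is not the ray set of a cone. One must also handle the degenerate case $|\Lambda|<m+1$, where (2) has to be read as vacuous or as covering smaller subsets, together with any repeated rays $\rho$ carrying different labels.
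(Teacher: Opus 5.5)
Your proposal follows the paper's proof in structure (Remark~\ref{rem:dist-lattice}, Theorem~\ref{thm:subfamily} and Proposition~\ref{prop:splitting} for (2)$\Rightarrow$(1); $m+1$ lines in general position with trivial filtrations off $A$ for (1)$\Rightarrow$(2)), but is more careful: the paper simply asserts compatibility and ignores labels, whereas your label-matched reading of (2) is genuinely needed, since for $\mathbb{A}^2$ with one coordinate axis doubled and $m=2$ the literal reading holds while three general lines give a non-split rank-$2$ bundle. The loose ends you flag are defects of the statement shared with the paper, not of your method: one needs $m\ge 2$ (for $m=1$ two ``general'' lines in $k^1$ coincide, and $\mathbb{P}^1$ is a counterexample), and one needs $|\Lambda|\ge m+1$ (otherwise the vacuous reading fails for $\mathbb{P}^2$ with $m=3$, e.g.\ via $\mathscr{T}_{\mathbb{P}^2}\oplus\mathcal{O}$, and your ``at most $m+1$'' reading breaks (1)$\Rightarrow$(2) for $\mathbb{P}^1$ with $m=2$), while a version valid in all cases quantifies over labelled sets $A$ with $3\le |A|\le m+1$.
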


\begin{proof}
    Let us assume that any rank $m$ locally free equivariant sheaf on $X(\calS)$ splits. Assume by contradiction that $A$ is a subset of $\Lambda$ of cardinality $m+1$ with the condition that there is no $j \in I$, such that $\{\rho|[\rho, i] \in A\}$ generates a cone of $\Delta_{jj}$. Let $E$ be an $m$-dimensional vector space over $k$ and $\{L_{[\rho,i]}|[\rho, i] \in A\}$ be $m+1$ lines in $E$ in general position. Then the collection of filtrations of vector spaces $(E, \{E^{[\rho,i]}(s)\}_{[\rho, i] \in \Lambda, \, s \in \Z})$ defined below satisfy the compatibility condition of Remark \ref{rem:dist-lattice}, and hence defines an equivariant locally free sheaf $\mc{E}$ of rank $m$ on the toric prevariety $X(\calS)$. \\
    $ E^{[\rho,i]}(s) = \left\{ \begin{array}{cc}
	%{r@{\quad \quad}l}
	0 & s \leq -2 \\ 
	L_{[\rho,i]} & s=-1 \\
	E & s \geq 0,
	\end{array} \right.$, for $[\rho, i] \in A$,
 $E^{[\rho,j]}(s) = \left\{ \begin{array}{cc}
	%{r@{\quad \quad}l}
	0 & s \leq -1 \\ 
	E & s \geq 0,
	\end{array} \right.$, for $[\rho, j] \notin A$.\\However, from Proposition \ref{prop:splitting}, it follows that $\mc{E}$ does not split equivariantly, which is a contradiction.

    Now, let us assume that (2) holds. Let $\mc{E}$ be an equivariant locally free sheaf of rank $m$ on $X(\calS)$, corresponding to the collection of filtrations of vector spaces $(E, \{E^{[\rho,i]}(s)\}_{[\rho, i] \in \Lambda, \, s \in \Z})$. Then by Remark \ref{rem:dist-lattice}, any $(m+1)$-family of filtrations forms a distributive lattice, and hence using the Theorem \ref{thm:subfamily} and Proposition \ref{prop:splitting}, we see that $\mc{E}$ splits equivariantly. 
\end{proof}

\begin{example}
Let $I=\{1,2\}$ and consider the three-dimensional cone $\sigma=\mathbb{R}_{\geq 0}(1,0,0) \oplus \R_{\geq 0} (0,1,0) \oplus \R_{\geq 0} (0,0,1)$ in $\R^3$. For $i \in \{1,2\}$, let $\Delta_{ii}$ be the fan of $\sigma$ and its faces. Let $\Delta_{12} = \{\rho_1=\mathbb{R}_{\geq 0}(1,0,0), \, \rho_2=\R_{\geq 0} (0,1,0), \, \rho_3=\R_{\geq 0} (0,0,1), \{0\}\}= \Delta_{21}$. Let $\calS$ be the system of fans given by $\{\Delta_{11}, \Delta_{22},\Delta_{12},\Delta_{21}\} $. Then $X(\calS)$ is $\mathbb{A}^3$ with doubled coordinate axes. It is obtained by patching two copies of $\A^3$ along the complement of the coordinate axes. It is a non-separated smooth toric prevariety. %$\{(0,y,z)| y,z \in k\}\cup \{(x,0,z)| x,z \in k\} \cup \{(x,y,0)| x,y \in k\}$. 
Observe that $\Lambda = \{[\rho_1,1]=[\rho_1,2], \, [\rho_2, 1]=[\rho_2,2], \, [\rho_3, 1]=[\rho_3,2] \}.$  There is only one possibility for a subset of $\Lambda$ of cardinality 3, which is $\Lambda$ itself. Then we can take $\sigma=\mathbb{R}_{\geq 0}(1,0,0) \oplus \R_{\geq 0} (0,1,0) \oplus \R_{\geq 0} (0,0,1)$ in $\Delta_{11}$ satisfying the second condition of Corollary \ref{cor:ranksplit}. Hence, any rank $2$ equivariant locally free sheaf on $X(\calS)$ splits.

\end{example}

\subsection{Equivariant locally free sheaves on toric prevarieties as piecewise linear maps} 
In \cite{KM22}, the authors classify equivariant locally free sheaves on toric varieties in terms of piecewise linear maps to the extended Tits building associated with the general linear group. In this subsection, we obtain a similar description of equivariant locally free sheaves on toric prevarieties.

\begin{definition}
       Let $E$ be a finite-dimensional vector space over $k$. We call a function $v :E \to \overline{\R}=\R \cup \{\infty\}$ a \emph{prevaluation} if the following hold:
\begin{enumerate}
    \item For all $e \in E$ and $c \in k \setminus \{0\}$ we have $v(ce) = v(e)$.
    \item (Non-Archimedean property) For all $e_1, e_2 \in E \setminus \{0\}$, $e_1 + e_2 \neq 0$, the non-Archimedean
inequality $v(e_1 + e_2) \geq \min\{v(e_1), v(e_2)\}$ holds.

\item $v(e)= \infty$ if and only if $e=0$.
\end{enumerate}
We say that a prevaluation is \emph{integral} if the image lies in $\overline{\Z}=\Z \cup \{\infty\}$.
\end{definition}

 \begin{definition} 
 Let $F:E \rar E'$ be a linear map of $k$-vector spaces. Given a prevalutaion $w :E' \to \overline{\R}$, the \emph{pullback} $F^*(w):E \to \overline{\R}$ is defined to be $F^*(w)=w \circ F$. The pullback is a prevaluation only if $F$ is injective.
 \end{definition}
 An integral prevaluation $v :E  \to \overline{\Z}$ defines an increasing full filtration $E_{\bullet}=(E_{v \geq -a})_{a \in \Z}$ of $E$ by $$E_{v \geq -a}:=\{e \in E \, | \, v(e) \geq -a\} .$$
Conversely, an increasing full filtration $E_{\bullet}=(E_a)_{a \in \Z}$ of a vector space $E$ defines an integral prevaluation $v :E  \to \overline{\Z}$ by \[v(e)=\min \{a \in \Z | e \in E_a\}.\]

We denote the set of prevaluations $v :E  \to \overline{\R}$ by $\t{\B}(E)$ and call it the \emph{extended Tits building of $E$}. Let $B=\{b_1, \ldots, b_r\}$ a basis of $E$. We denote the set of all prevaluations which satisfy $v(\sum \lambda_i b_i)= \min \{v(b_i)| \lambda_i \neq 0\}$ by $\t{A}(B)$ and call it the \emph{extended apartment of $B$}.

The following definition appears in \cite[pp 69--70]{KKMD}.
\begin{definition}
    Let $I$ be a finite indexing set. Let $\calS=(\Delta_{ij})_{i,j \in I}$ be a system of fans in $N_\R$. Define the \emph{topological space $|\calS|$ associated to $\calS$} as \[|\calS|=(\bigsqcup_{(\sigma, i) \in \mathfrak{F}(\calS)}|\sigma|)/\sim\] where $x \in |\sigma|$ for $(\sigma, i) \in \mathfrak{F}(\calS)$ is equivalent to $y \in |\tau|$ for $(\tau, j) \in \mathfrak{F}(\calS)$ if $[\sigma,i]=[\tau,j]$ in $\Omega(\calS)$ (in other words, $\sigma= \tau \in \Delta_{ij}$) and $x=y \in |\sigma|=|\tau|$. Here $|\sigma|$ means the support of the cone $\sigma$ in $N_{\R}$. 
\end{definition}

In the following $E, E'$
 denote finite-dimensional vector spaces over $k$.
\begin{definition}
We say that a map $\Phi: |\calS| \to \t{\B}(E)$ is a \emph{piecewise linear map} if the following conditions hold:
\begin{itemize}
\item[(a)] For each cone $[\sigma, i] \in \Omega(\calS)$, there is a basis $B_\sigma \subset E$ (not necessarily unique) such that $\Phi(|\sigma|)$ lies in the extended apartment $\t{A}(B_\sigma)$.
\item[(b)] ${\Phi}|_{|\sigma|}$ is the restriction of a linear map from the linear span of $\sigma$ to $\t{A}(B_\sigma)$. \end{itemize}
We say that $\Phi$ is an \emph{integral} piecewise linear map if for each $[\sigma,i] \in \Omega(\calS)$, ${\Phi}|_{|\sigma|}$ is the restriction of an integral linear map from $N_\R$ to $\t{A}(B_\sigma)$. Here, integral means it sends $N$ to integral prevaluations. 

Now, given two piecewise linear maps $\Phi: |\calS| \to \t{\B}(E)$ and $\Phi': |\calS| \to \t{\B}(E')$, a linear map $F:E \to E'$ is said to be a \emph{morphism} $F: \Phi \to \Phi'$ if for any $x \in |\calS|$ and any $e \in E$, we have $F^*(\Phi'(x))(e):=\Phi'(x)(F(e))$.
\end{definition}

Thus, we have the category of piecewise linear maps on $|\calS|$ and the subcategory of integral piecewise linear maps on $|\calS|$. With this framework, we can restate Theorem \ref{th:klyachko-prevar}.

\begin{theorem}
    The category of equivariant locally free sheaves on $X(\calS)$ is naturally equivalent to the category of integral piecewise linear maps from $|\calS|$ to $\widetilde{\mathfrak{B}}(E)$, for all finite-dimensional $k$-vector spaces $E$.
\end{theorem}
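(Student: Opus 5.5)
The plan is to reduce the statement to Theorem \ref{th:klyachko-prevar} by building an explicit dictionary, in both directions, between integral piecewise linear maps $\Phi\colon|\calS|\to\t{\B}(E)$ and compatible families of full increasing filtrations $(E,\{E^{[\rho,i]}(s)\})$ indexed by $\Lambda$. Morphisms are then matched, and composing with the equivalence of Theorem \ref{th:klyachko-prevar} gives the claim.

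\textbf{From a piecewise linear map to filtrations.} Let $\Phi$ be integral piecewise linear. For each $[\rho,i]\in\Lambda$, the point $v_\rho\in|\rho|\subset|\calS|$ is well defined. Here we use that $|\calS|$ identifies $|\rho|$ in chart $i$ with $|\rho|$ in chart $j$ exactly when $[\rho,i]=[\rho,j]$. By integrality, $\Phi(v_\rho)$ is an integral prevaluation on $E$, and we set
\[E^{[\rho,i]}(s):=\{e\in E : \Phi(v_\rho)(e)\ge -s\}.\]
This is a full increasing filtration. A sign convention has to be fixed so that the tangent bundle example (Proposition \ref{prop:tangent}) comes out correctly, and this should be checked against \cite{KM22}.

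Next we verify the compatibility condition of Theorem \ref{th:klyachko-prevar} for each $\sigma\in\Delta_{ii}$. Take the basis $B_\sigma$ from condition (a). On $|\sigma|$, $\Phi$ is the restriction of an integral linear map into $\t{A}(B_\sigma)$. So for each $b\in B_\sigma$ there is a linear form $x\mapsto\Phi(x)(b)$ on $\operatorname{span}\sigma$ that is integral on $N\cap\operatorname{span}\sigma$, and it extends to some $m_b\in M$ well defined modulo $\sigma^\perp\cap M$. Grouping the elements $b$ by the class $-m_b$ (with the sign matching the convention above) gives $E=\bigoplus_{m\in M/\sigma^\perp\cap M}E^\sigma_m$. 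The apartment property, namely that $v(\sum\lambda_b b)$ is the minimum of $v(b)$ over the $b$ with $\lambda_b\neq0$, then yields exactly $E^{[\rho,i]}(s)=\sum_{\langle m,v_\rho\rangle\le s}E^\sigma_m$ for $\rho\in\sigma(1)$.

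\textbf{From filtrations to a piecewise linear map.} Conversely, start from compatible filtrations. For each $[\sigma,i]$, choose a basis $B_\sigma$ adapted to the decomposition $E=\bigoplus E^\sigma_m$, so each $b$ lies in some $E^\sigma_{m_b}$. Define $\Phi$ on $|\sigma|$ by
\[\Phi(x)\Big(\sum\lambda_b b\Big)=\min\{\pm\langle m_b,x\rangle:\lambda_b\neq0\}.\]
This is linear and integral into $\t{A}(B_\sigma)$.

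The main obstacle is showing that this is well defined: it must be independent of the choice of decomposition and basis, and it must agree on overlaps $|\tau|\subset|\sigma|$ and across the glueing $[\sigma,i]=[\sigma,j]$. The approach is to prove the intrinsic formula
\[\Phi(x)(e)=\min_{\rho\in\sigma(1)}\text{-type expression in the values }\Phi(v_\rho)(e)?\]
This fails in general, since values are linear only on apartments. So instead we would characterise $\Phi(x)$ for $x\in\sigma^\circ$ directly via the filtrations. Writing $x=\sum a_\rho v_\rho$ with $a_\rho\ge 0$, we claim that $\Phi(x)(e)\ge -s$ holds iff
\[e\in\sum_{(s_\rho)\,:\,\sum a_\rho s_\rho\le s}\ \bigcap_\rho E^{[\rho,i]}(s_\rho).\]
The right-hand side depends only on the filtrations attached to the rays of $\sigma$, which are themselves well defined on $\Lambda$. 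This settles independence, agreement on faces, and compatibility with glueing in one stroke. We verify the claim using the adapted basis, as in \cite{KM22}.

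\textbf{Morphisms and conclusion.} A linear map $F\colon E\to E'$ is a morphism of piecewise linear maps, meaning the prevaluation inequality $\Phi'(x)(F(e))\ge\Phi(x)(e)$ holds. This condition is checked at $x=v_\rho$ and is equivalent to $F(E^{[\rho,i]}(s))\subseteq E'^{[\rho,i]}(s)$. Conversely, compatibility with the filtrations at the ray generators propagates to all of $|\sigma|$ by the intrinsic formula above. The two constructions are mutually inverse, because the ray values determine $\Phi$ and $\Phi$ recovers the filtrations. Combining this with Theorem \ref{th:klyachko-prevar} proves the theorem.
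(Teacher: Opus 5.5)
Your proposal is correct, but it takes a different route from the paper. The paper argues chartwise: it applies \cite[Theorem 2.2, Example 2.8]{KM22} to each restriction $\mathcal{E}|_{X_i}$ to get integral piecewise linear maps $\Phi_i$ on $|\Delta_{ii}|$. It then asserts that these patch to a map on $|\calS|$ and leaves the converse to the reader.

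You never use \cite{KM22} as a black box. Instead you compose with Theorem \ref{th:klyachko-prevar} and write down an explicit, mutually inverse dictionary between compatible $\Lambda$-indexed filtrations of a single space $E$ and integral piecewise linear maps. Your key lemma is the superlevel-set formula $\{e:\Phi(x)(e)\ge -s\}=\sum_{\sum_\rho a_\rho s_\rho\le s}\bigcap_{\rho\in\sigma(1)}E^{[\rho,i]}(s_\rho)$.

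That formula holds for every integral piecewise linear map, not only the ones you construct. In the basis $B_\sigma$ from condition (a), each $E^{[\rho,i]}(s_\rho)$ with $\rho\in\sigma(1)$ is spanned by a subset of $B_\sigma$. Hence both sides are spanned by the $b$ with $\Phi(x)(b)=\sum a_\rho\Phi(v_\rho)(b)\ge -s$; for the reverse inclusion take $s_\rho=-\Phi(v_\rho)(b)$, which is an integer by integrality.

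What your route buys is exactly the step the paper passes over. The paper's ``these patch together'' tacitly needs two facts: that the Kaveh--Manon correspondence is natural under restriction to the open toric subvarieties $X_{ij}$, and that it is compatible with the transition isomorphisms identifying the generic fibres. Your formula shows directly that $\Phi$ is determined by its values at the $v_\rho$ for $[\rho,i]\in\Lambda$. That makes four things immediate:
\begin{enumerate}
\item independence of the choice of decomposition and basis;
\item agreement on faces and across the glueing $[\sigma,i]=[\sigma,j]$;
\item that the two constructions are mutually inverse;
\item that filtration-compatibility and $\Phi'(x)(F(e))\ge\Phi(x)(e)$ define the same morphisms.
\end{enumerate}

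The paper's route is shorter and keeps the geometric meaning of $\Phi$ from \cite{KM22}. Yours is self-contained modulo Theorem \ref{th:klyachko-prevar}. It also repairs two loose ends in the paper's setup: the inconsistent sign between the prevaluation-to-filtration and filtration-to-prevaluation conventions, and the missing inequality in the paper's definition of morphisms. In a write-up, delete the self-correcting displayed formula with the question mark, and fix the sign convention once at the start.
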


\begin{proof}
   Let $\mathcal{E}$ be an equivariant locally free sheaf on $X(\calS)$. We have the open cover of $X(\calS)$ by affine toric varieties $\{X_i:i \in I\}$. Then associated to $\mathcal{E}|_{X_i}$, we have an integral piecewise linear map $\Phi_i:|\Delta_i| \rar \widetilde{\mathfrak{B}}(E)$ by \cite[Theorem 2.2, Example 2.8]{KM22}. These $\{\Phi_i:i \in I\}$ patch together to give an integral piecewise linear map $\Phi:|\calS| \rar \widetilde{\mathfrak{B}}(E)$.

   The converse direction is similar and left to the reader.
\end{proof}

\section{Examples}
In this section, we consider a few specific smooth toric prevarieties and write the combinatorial description of their tangent bundles using Proposition \ref{prop:tangent}.
\begin{example}
    Consider the system of fans describing the affine line with double origin as in Example \ref{example_A1twoorigins}. Then the associated combinatorial data of the tangent bundle is:	\[ \mathscr{T}^{[\sigma,i]}(s) = \left\{ \begin{array}{cc}
	%{r@{\quad \quad}l}
	0 & s \leq -2 \\ 
	k & s\geq-1 .
	\end{array} \right. \] where $\sigma$ is the one-dimensional cone $\R_{\geq 0}$, and $i=1, \,2$.
\end{example}
\begin{example}
Let $\sigma_1 = \R_{\geq 0}(0,1) \oplus \R_{\geq 0}(1,-1)$ and $\sigma_2 = \R_{\geq 0} (1,0)\oplus \R_{\geq 0} (-1,-1 )$ in $\R^2$.  For $i \in \{1,2\}$, let $\Delta_{ii}$ be the fan of $\sigma_i$ and its faces. Let $\Delta_{12} = \{0\}$. Let $\calS$ be the system of fans consisting of affine fans $\Delta_{11}, \Delta_{22} $ and $\Delta_{12}$. Then $X(\calS)$ is a (non-separated) smooth toric prevariety (see Figure \ref{fig:toric-prevar}. 
Let $\rho_1 = \R_{\geq 0}(0,1), \rho_2 = \R_{\geq 0}(1,-1), \rho_3 = \R_{\geq 0}(1,0) \text{ and }\rho_4 =\R_{\geq 0} (-1,-1).$
Then, the indexing set is given by $\Lambda = \{[\rho_1,1], \, [\rho_2, 
 1], \, [\rho_3, 2], \, [\rho_4, 2] \}.$

 \begin{figure}[ht]
     \centering
      \begin{tikzpicture}[scale=0.5]
 \fill[blue!40!white, opacity=0.2] (0,0) -- (7,0) arc (0: -135:7) -- cycle;

  \fill[red!40!white, opacity=0.2] (0,0) -- (0,7) arc (90: -45:7) -- cycle;
     \draw[blue, thick, ->] (0,0) -- (7.5,0);

     \draw[red, thick, ->] (0,0) -- (0,7.5);

     \draw[red, thick, ->] (0,0) -- (5.5,-5.5);

     \draw[blue, thick, ->] (0,0) -- (-5.5, -5.5);
    \filldraw[cyan] (0,0) circle (0.2cm);
     \node at (3,3) {\textcolor{red}{$\sigma_1$}};

 \node at (0,7.8) {\textcolor{red}{$\rho_1$}};

  \node at (5.8,-5.8) {\textcolor{red}{$\rho_2$}};
          \node at (0,-3) {\textcolor{blue}{$\sigma_2$}};

           \node at (7.9,0) {\textcolor{blue}{$\rho_3$}};
            \node at (-5.8,-5.8) {\textcolor{blue}{$\rho_4$}};
 \end{tikzpicture}
     \caption{Maximal invariant affine open chart of the toric prevariety}
     \label{fig:toric-prevar}
 \end{figure}

The tangent bundle $\mathscr{T}_X$ is given by the following data:\\ 	$ \mathscr{T}^{[\rho_1,1]}(s) = \left\{ \begin{array}{cc}
	%{r@{\quad \quad}l}
	0 & s \leq -2 \\ 
	\text{Span}(0,1) & s=-1 \\
	k^2 & s \geq 0,
	\end{array} \right.$,
 $\mathscr{T}^{[\rho_2,1]}(s) = \left\{ \begin{array}{cc}
	%{r@{\quad \quad}l}
	0 & s \leq -2 \\ 
	\text{Span}(1,-1) & s=-1 \\
	k^2 & s \geq 0,
	\end{array} \right.$,\\
$\mathscr{T}^{[\rho_3,2]}(s) = \left\{ \begin{array}{cc}
	%{r@{\quad \quad}l}
	0 & s \leq -2 \\ 
	\text{Span}(1,0) & s=-1 \\
	k^2 & s \geq 0,
	\end{array} \right.    
$ and
 $ \mathscr{T}^{[\rho_4,2]}(s) = \left\{ \begin{array}{cc}
	%{r@{\quad \quad}l}
	0 & s \leq -2 \\ 
	\text{Span}(-1,-1) & s=-1 \\
	k^2 & s \geq 0.
	\end{array} \right.
    $\\
By Remark \ref{rem:dist-lattice}, the tangent bundle does not split.

\end{example}

\begin{example}\label{ex:homog-spectra}
    In this example, we consider a toric prevariety occurring as the homogeneous spectra of multigraded polynomial algebras as in \cite{BS_Amplefamilies}. Let $X$ be the blow-up of $\mathbb{P}^2$ at a torus-fixed point $P$. This is, in fact, the first Hirzebruch surface. Then fan of $X$ has four maximal cones    
        \begin{align*}
                &\R_{\geq 0}(1,0) \oplus \R_{\geq 0}(1,1) &\R_{\geq 0}(0,1) \oplus \R_{\geq 0}(1,1) \\
            &\R_{\geq 0}(1,0) \oplus \R_{\geq 0}(-1,-1) &\R_{\geq 0}(0,1) \oplus \R_{\geq 0}(-1,-1)
        \end{align*}
    and their faces. The invariant prime divisors in $X$ are denoted by $D_0, \, D_1, \, D_2$ and $E$ corresponding to the rays $\R_{\geq 0}(-1,-1), \R_{\geq 0}(1,0), \R_{\geq 0}(0,1)$ and $\R_{\geq 0}(1,1)$ respectively.  Then $\text{Pic}(X) \cong \text{Cl}(X)$ is a free $\Z$-module of rank $2$, generated by the divisor classes $[D_0]$ and $[D_0-E]$. The Cox ring of $X$, denoted by $\text{Cox}(X)$, is a polynomial ring in four variables $k[T_0, T_1, T_2, T_3]$, graded by $\text{Pic(X)}$, where $\deg T_i$ is equal to $[D_i]$ for $i=1, 2, 3$ and $\deg T_3$ is equal to $[E]$. Let us consider the map $d: \Z^4 \rar \text{Pic}(X)$ given by $e_i \mapsto \deg T_{i-1}$, where $e_1, \, e_2, \, e_3, \, e_4$ denote the standard basis of $\Z^4$. Then the kernel of $d$ is denoted by $M$, which is a free $\Z$-module of rank $2$ generated by $e_2 + e_4 - e_1, \, e_3 + e_4 - e_1$. Then by \cite[Proposition 3.4]{BS_Amplefamilies}, the homogenous spectra of the $\text{Pic}(X)$-graded polynomial ring $\text{Cox}(X)$, denoted by $\text{Proj}^{\text{Pic}(X)}(\text{Cox}(X))$, is a simplicial toric prevariety with the action of the torus $\text{Spec} (k[M])$. This is non-separated, as discussed in \cite[Example 4.12]{KU}. We describe the associated affine system of fans $\calS$ in the vector space $\R^2 \cong N \otimes_{\Z} \R$ associated to the co-character lattice $N= \text{Hom} (M, \Z)$, following \cite[Remark 3.7]{BS_Amplefamilies}. Let $I=\{1, 2, \ldots, 5\}$. The maximal $T$-invariant affine open subsets of the toric prevariety correspond to the following irreducible fans:
    \begin{equation*}
        \begin{split}
             &\Delta_{11} \text{ consists of the faces of } \R_{\geq 0}(1,0) \oplus \R_{\geq 0}(1,1),\\
             &\Delta_{22} \text{ consists of the faces of }\R_{\geq 0}(0,1) \oplus \R_{\geq 0}(1,1),\\
             &\Delta_{33} \text{ consists of the faces of }\R_{\geq 0}(1,0) \oplus \R_{\geq 0}(0,1),\\
             &\Delta_{44} \text{ consists of the faces of }\R_{\geq 0}(1,0) \oplus \R_{\geq 0}(-1,-1),\\
             &\Delta_{55} \text{ consists of the faces of }\R_{\geq 0}(0,1) \oplus \R_{\geq 0}(-1,-1).\end{split}
    \end{equation*}
    The lower-dimensional fans are given by:
      \begin{equation*}
        \begin{split}
    &\Delta_{12}=\{\R_{\geq 0}(1,1), (0,0)\},\hspace{60pt} \Delta_{13}=\{\R_{\geq 0}(1,0), (0,0)\},\\
             &\Delta_{14}=\{\R_{\geq 0}(1,0), (0,0)\}, \hspace{60pt} \Delta_{15}=\{ (0,0)\}, \\ &\Delta_{23}=\{\R_{\geq 0}(0,1), (0,0)\}, \hspace{60pt} \Delta_{24}=\{ (0,0)\}, \\
             &\Delta_{25}=\{\R_{\geq 0}(0,1), (0,0)\}, \hspace{60pt} \Delta_{34}=\{\R_{\geq 0}(1,0), (0,0)\}, \\
             &\Delta_{35}=\{\R_{\geq 0}(0,1), (0,0)\}, \hspace{60pt} \Delta_{45}=\{\R_{\geq 0}(-1,-1), (0,0)\}.
                \end{split}
    \end{equation*}

\begin{figure}[ht]
    \centering
   \begin{tikzpicture}[scale=0.5]
\fill[blue!40!white, opacity=0.2] (0,0)--(5,0) arc (0: 45:5);

\fill[yellow!40!white] (0,0) --(0,5) arc(90:45:5); 

\fill[red!40!white, opacity=0.2] (0,0) -- (7,0) arc (0:90:7);

\fill[green!30!white, opacity=0.2] (0,0)-- (7,0) arc (0:-135:7);

\fill[teal!30!white, opacity=0.2] (0,0)--(0,7) arc (90:225:7);
     \draw[brown, thick, ->] (0,0) -- (7.5,0);

     \draw[violet, thick, ->] (0,0) -- (0,7.5);

     \draw[olive!70!black, thick, ->] (0,0) -- (6,6);

     \draw[pink!70!black, thick, ->] (0,0) -- (-6, -6);

     \filldraw[red] (0,0) circle (0.2cm);
\node at (3,1) {\textcolor{blue}{$\Delta_{11}$}};

\node at (1,3) {\textcolor{yellow!50!black}{$\Delta_{22}$}};

\node at (3,5) {\textcolor{purple}{$\Delta_{33}$}};

\node at (1,-3) {\textcolor{green!70!black}{$\Delta_{44}$}};

\node at (-3,2) {\textcolor{teal}{$\Delta_{55}$}};

\node at (7.9,0) {\textcolor{brown}{$\rho_1$}};

\node at (7,7) {\textcolor{olive!70!black}{$\rho_2$}};

\node at (0,7.8) {\textcolor{violet}{$\rho_3$}};

\node at (-7,-7) {\textcolor{pink!70!black}{$\rho_4$}};
     
 \end{tikzpicture}
    \caption{Affine system of fans describing the toric prevariety in Example \ref{ex:homog-spectra}}
    \label{fig:homog-spectra}
\end{figure}

\noindent    
The indexing set is given by $\Lambda = \{[\rho_1,1], \, [\rho_2, 
 1], \, [\rho_3, 2], \, [\rho_4, 4] \}$, where $\rho_1=\R_{\geq 0}(1,0), \, \rho_2= \R_{\geq 0}(1,1), \, \rho_3= \R_{\geq 0}(0,1)$ and $\rho_4=\R_{\geq 0}(-1,-1)$ (see Figure \ref{fig:homog-spectra}). 
The tangent bundle is given by the following data:\\ 	$ \mathscr{T}^{[\rho_1,1]}(s) = \left\{ \begin{array}{cc}
	%{r@{\quad \quad}l}
	0 & s \leq -2 \\ 
	\text{Span}(1,0) & s=-1 \\
	k^2 & s \geq 0,
	\end{array} \right.$,
 $\mathscr{T}^{[\rho_2,1]}(s) = \left\{ \begin{array}{cc}
	%{r@{\quad \quad}l}
	0 & s \leq -2 \\ 
	\text{Span}(1,1) & s=-1 \\
	k^2 & s \geq 0,
	\end{array} \right.$,\\
$\mathscr{T}^{[\rho_3,2]}(s) = \left\{ \begin{array}{cc}
	%{r@{\quad \quad}l}
	0 & s \leq -2 \\ 
	\text{Span}(0,1) & s=-1 \\
	k^2 & s \geq 0,
	\end{array} \right.    
$ and
 $ \mathscr{T}^{[\rho_4, 2]}(s) = \left\{ \begin{array}{cc}
	%{r@{\quad \quad}l}
	0 & s \leq -2 \\ 
	\text{Span}(-1,-1) & s=-1 \\
	k^2 & s \geq 0.
	\end{array} \right.
    $\\
Here again, following Remark \ref{rem:dist-lattice}, the tangent bundle does not split.
\end{example}

\noindent{\bf Acknowledgements.} We would like to thank the referee for their valuable comments and suggestions that
have led to several improvements in both content and presentation.

\noindent
\textbf{Funding }The first author is supported by the Indian Institute of Technology (Indian School of Mines) Faculty Research Scheme grant FRS(219)/2024-2025/M\&C.
The second author is supported by the Israel Science Foundation, grant No. 1405/22.

\bibliographystyle{alpha}
\bibliography{prevar}

\end{document}